\crefname{hypothesis}{Hypothesis}{Hypotheses}
\crefname{fact}{Fact}{Facts}
\title{A Mini-Batch Quasi-Newton Proximal Method for Constrained Total Variation Nonlinear Image Reconstruction
\thanks{Submitted to the editors DATE.
}}
\author{Tao Hong\thanks{Oden Institute for Computational Engineering and Sciences, University of Texas at Austin, Austin, TX 78712, USA (\email{tao.hong@austin.utexas.edu}).}
\and Thanh-an Pham\thanks{Biomedical Imaging Group, Ecole polytechnique f\'{e}d\'{e}rale de Lausanne, Lausanne, Switzerland (\email{thanh-an.pham@proton.me}).}
\and Irad Yavneh\thanks{Department
of Computer Science, Technion-Israel Institute of Technology, Haifa, 3200003 Israel (\email{irad@cs.technion.ac.il}).}
\and Michael Unser\thanks{Biomedical Imaging Group, Ecole polytechnique f\'{e}d\'{e}rale de Lausanne, Lausanne, Switzerland (\email{michael.unser@epfl.ch}).}}
\Crefname{figure}{Fig.}{Figs.}
\newtheorem{assumption}{Assumption}[section] 
\crefname{assumption}{assumption}{assumptions}
\Crefname{assumption}{Assumption}{Assumptions}
\newcommand{\mrm}{\mathrm}
\newcommand{\e}{\begin{equation}}
\newcommand{\ee}{\end{equation}}
\newcommand{\en}{\begin{equation*}}
\newcommand{\een}{\end{equation*}}
\newcommand{\eqn}{\begin{eqnarray}}
\newcommand{\eeqn}{\end{eqnarray}}
\newcommand{\bmat}{\begin{bmatrix}}
\newcommand{\emat}{\end{bmatrix}}
\newcommand{\BIT}{\begin{itemize}}
\newcommand{\EIT}{\end{itemize}}
\newcommand{\vr}{\bm r}
\newcommand{\vu}{\bm u}
\newcommand{\vx}{\bm x}
\newcommand{\uvd}{\mathrm{\mathbf d}}
\newcommand{\uvg}{\mathrm{\mathbf g}}
\newcommand{\uvh}{\mathrm{\mathbf h}}
\newcommand{\uvk}{\mathrm{\mathbf k}}
\newcommand{\uvm}{\mathrm{\mathbf m}}
\newcommand{\uvr}{\mathrm{\mathbf r}}
\newcommand{\uvs}{\mathrm{\mathbf s}}
\newcommand{\uvu}{\mathrm{\mathbf u}}
\newcommand{\uvv}{\mathrm{\mathbf v}}
\newcommand{\uvw}{\mathrm{\mathbf w}}
\newcommand{\uvx}{\mathrm{\mathbf x}}
\newcommand{\uvy}{\mathrm{\mathbf y}}
\newcommand{\uvz}{\mathrm{\mathbf z}}
\newcommand{\umB}{\mathrm{\mathbf B}}
\newcommand{\umD}{\mathrm{\mathbf D}}
\newcommand{\umH}{\mathrm{\mathbf H}}
\newcommand{\umI}{\mathrm{\mathbf I}}
\newcommand{\umJ}{\mathrm{\mathbf J}}
\newcommand{\umP}{\mathrm{\mathbf P}}
\newcommand{\umU}{\mathrm{\mathbf U}}
\newcommand{\umW}{\mathrm{\mathbf W}}
\newcommand{\umX}{\mathrm{\mathbf X}}
\newcounter{oursection}
\def\ours{{BQNPM}}
\def\oursF{{FBQNPM}}
\def\wangs{{SQNPM}}
\def\increm{mini-batch{}}
\def\Ksubset{S}
\newcommand{\stepsize}{a_k}
\newcommand{\Trans}{\mathsf{T}}
\newcommand{\vnabla}{\pmb{\nabla}}
\newcommand{\iterDiffk}{\bm\Delta_k}
\crefname{assumption}{assumption}{assumptions}
\Crefname{assumption}{Assumption}{Assumptions}
\definecolor{darkred}{rgb}{0.7,0,0}
\definecolor{darkgreen}{rgb}{0,0.5,0}
\definecolor{darkblue}{rgb}{0,0,0.5}
\definecolor{goldenrod}{rgb}{0.85, 0.65, 0.13}
\pgfplotsset{compat=1.5.1}
\tikzset{snake it/.style={decorate, decoration=snake}}
 \pgfplotsset{
        table/search path={figs/,figs/sim3D},
    }
\newcommand{\cb}{\color{black}}
\newcommand{\sumBt}{\umB^k}
\newcommand{\MajRev}[1]{{\color{black} #1}}
\newcommand{\MinRev}[1]{{\color{black} #1}}
\setlist[enumerate]{leftmargin=.5in}
\setlist[itemize]{leftmargin=.5in}
\begin{document}

\maketitle

\begin{abstract}
Over the years, computational imaging with accurate nonlinear physical models has garnered considerable interest due to its ability to achieve high-quality reconstructions.
However, using such nonlinear models for reconstruction is computationally demanding.
A popular choice for solving the corresponding inverse problems is the accelerated stochastic proximal method (ASPM), with the caveat that each iteration is still expensive.
To overcome this issue, we propose a \increm{} quasi-Newton proximal method (\ours{}) tailored to image reconstruction problems with constrained total variation regularization. Compared to ASPM, \ours{} requires fewer iterations to converge. Moreover, we propose an efficient approach to compute a weighted proximal mapping at a cost similar to that of the proximal mapping in ASPM. We also analyze the convergence of \ours{} \MajRev{in the nonconvex setting}. We assess the performance of \ours{} on three-dimensional inverse-scattering problems with linear and nonlinear physical models. Our results on simulated and real data demonstrate the effectiveness and efficiency of \ours{}, while also validating our theoretical analysis.
\end{abstract}

\begin{keywords}
optical diffraction tomography, mini-batch, nonconvex, nonlinear inverse problem, image restoration
\end{keywords}

\begin{MSCcodes}
65N21, 47A52, 92C55, 65K10
\end{MSCcodes}

\section{Introduction}
The reconstruction of an image of interest from noisy measurements is a necessary step in many applications such as geophysical, medical, and optical imaging~\cite{neto2012introduction}. \MajRev{From some measurements, for example a set of $L$ acquired images of $M$ pixels $\{\uvy_l\in \mathbb{C}^M\}_{l=1}^L$, such a reconstruction is generally achieved by solving a composite minimization problem of the form}
\begin{equation}
\label{eq:defMainPro}
\min_{\uvx\in\mathcal C} \Phi(\uvx)\equiv \digamma(\uvx)+\lambda\,  h(\uvx),
\end{equation}
where $\digamma(\uvx)=\frac{1}{L}\sum_{l=1}^L f_l(\uvx)$ with $f_l(\uvx)=\frac{1}{2}\|\mathcal{H}_{l}(\mathrm{\uvx})- \uvy_l\|_2^2$, $\uvx\in\mathbb{R}^N$ is the vectorized image, and $\mathcal{C} \subset \mathbb{R}^{N}$ is a closed convex set. 
The data-fidelity terms $\{f_l\}_{l=1}^L$ ensure consistency with the measurements. \MajRev{Here, $N$ refers to the total volumes of the reconstructed image.} The (nonsmooth) regularization term~\MajRev{$h$} imposes some prior knowledge on the reconstructed image. The tradeoff parameter $\lambda>0$ is used to balance these two terms.
The operator~$\mathcal{H}_l:\mathbb{R}^N \rightarrow \mathbb{C}^M$ models the physical mapping from $\uvx$ to the measurements~$\uvy_l$. 

There is a growing interest in accurate physical models, motivated by the expectation that they can substantially improve reconstruction quality. Several imaging modalities have benefited from such refinements; for instance, optical diffraction tomography~(ODT)~\cite{pham2020three} or full waveform inversion~(FWI)~\cite{metivier2013full}.
However, accurate operators~$\mathcal{H}_l$ are usually nonlinear and require solving an additional system of equations iteratively when evaluating $\mathcal{H}_l(\uvx)$ (\emph{e.g.}, solving wave equations in ODT and FWI), which incurs high computational cost. Moreover, these operators result in the nonconvexity of $\{f_l\}_{l=1}^L$, introducing additional challenges in solving \eqref{eq:defMainPro}.

The regularization term \MajRev{$h$} incorporates prior information about the images to stabilize the reconstruction process. There \MajRev{exists} a plethora of options, such as total variation~(TV)~\cite{rudin1992nonlinear,hong2024complex}, the Hessian-Schatten norm~\cite{lefkimmiatis2013hessian}, deep-learning-based techniques~\cite{ulyanov2018deep}, and plug-and-play (PnP)/regularization by denoising (RED)~\cite{venkatakrishnan2013plug,romano2017little,hong2019acceleration,hong2024provable}, to name a few. Although recent priors may outperform TV, the latter is still widely used in 3D ODT~\cite{lim2019high,chowdhury2019high}. This observation motivates us to consider the constrained TV-based reconstruction, \textit{i.e.},
\begin{equation}
\label{eq:defMainProTV}
\min_{\uvx\in\mathcal C} \digamma(\uvx) +\lambda\,  \mrm{TV}(\uvx).
\end{equation} 
\MajRev{Note that we consider both isotropic and anisotropic TV throughout the paper; the explicit TV formulations are presented in \Cref{sec:pre:definationTV}.} Many iterative methods have been developed to handle the nonsmoothness of TV~\cite{chambolle2011upwind, chambolle2004algorithm,goldfarb2005second,chan2006total, beck2009fastTV}. In particular, Beck and Teboulle proposed the accelerated proximal method (APM)~\cite{beck2009fastTV}, which is one of the most popular first-order methods due to its low computational cost and fast convergence in many practical applications. \MajRev{If multiple machines are available, one can also adopt distributed optimization 
methods to solve~\eqref{eq:defMainProTV} \cite{alghunaim2020decentralized,boyd2011distributed}. 
However, this paper mainly focuses on optimization methods for solving 
\eqref{eq:defMainProTV} on a single machine.}

Quasi-Newton and Newton methods typically require fewer iterations than first-order methods in convex smooth optimization problems~\cite{nocedal2006numerical,schmidt2009optimizing}, owing to their use of second-order information. The quasi-Newton proximal methods (QNPMs) are variants adapted to composite problems~\cite{kim2010tackling,lee2014proximal,karimi2017imro,becker2019quasi,hong2020solving}. Ge \textit{et al.} \cite{ge2020proximal} and Hong \textit{et al.} \cite{hong2024complex} applied QNPMs to solve convex inverse problems with $L=1$ in X-ray imaging and magnetic resonance imaging, respectively. Kadu \textit{et al.} \cite{kadu2020high} used QNPMs for a nonlinear and nonconvex inverse-scattering problem with $L>1$. 
In their work, the authors observed faster convergence than APMs. Moreover, QNPMs can be seen as first-order methods with a variable metric. This perspective has led to another class of algorithms called variable metric operator splitting methods (VMOSMs) \cite{chouzenoux2014variable,bonettini2016variable,repetti2021variable}. We refer the reader to the prior work section in \cite{becker2019quasi}, where Becker \emph{et al.} discussed the relations between QNPMs and VMOSMs. However, these deterministic methods require the computation of the full gradient at each iteration, which can be prohibitive for $L\gg1$.

Stochastic methods are efficient iterative algorithms that mitigate the computational burden {{when $L\gg 1$}}.
These methods estimate the gradient from a (varying) subset of $\{f_l\}_{l=1}^L$ at each iteration~\cite{bottou2010large,johnson2013accelerating,defazio2014saga,schmidt2017minimizing}, making the computational cost independent of $L$.
The stochastic counterpart of APMs has been used in many instances of image reconstruction~\cite{chouzenoux2017stochastic,soubies2017efficient,pham2020three}. Thus, stochastic or incremental second-order methods, such as SLBFGS~\cite{moritz2016linearly}, IQN~\cite{mokhtari2018iqn}, and  SdLBFGS-VR~\cite{agarwal2017second} have been proposed to address $\min_{\uvx} \sum_l f_l(\uvx)$. Note that SLBFGS and IQN  assume that $\{f_l\}_{l=1}^L$ are strongly convex, while SdLBFGS-VR does not require a convexity assumption. 

The most challenging aspect of stochastic second-order methods is the reliable estimation of the Hessian from noisy gradient information. To address this difficulty, variance-reduction techniques have proven to be effective~\cite{moritz2016linearly,agarwal2017second,wang2017stochastic,goldfarb2020practical}. Other methods~\cite{curtis2016self,goldfarb2020practical,yang2022stochastic} were proposed to address the nonconvex setting. Wang \textit{et al.} \cite{wang2019stochastic} extended variance-reduced stochastic quasi-Newton methods to solve composite problems with \MajRev{$h=\|\cdot\|_1$} and nonconvex functions $\{f_l\}_{l=1}^L$. Using the first-order optimality conditions of \eqref{eq:defMainPro}, Yang \textit{et al.} \cite{yang2022stochastic} proposed a stochastic extra-step quasi-Newton method to find the solution of \eqref{eq:defMainPro} by solving a related nonlinear and nonsmooth equation. Wang \textit{et al.} \cite{zhang2025proximal} introduced a proximal stochastic quasi-Newton proximal method with an adaptive sampling scheme and a novel stochastic line search.  However, these methods either require the evaluation of the full gradient at regular intervals \cite{wang2019stochastic,yang2022stochastic} or involve extra step for computing the gradient and function value \cite{zhang2025proximal} during optimization. These can hinder the deployment of quasi-Newton proximal methods to large-scale imaging modalities such as 3D ODT, where $L$ is large and the physical model is nonlinear, making the computation of the gradient or function value expensive even for a single measurement.

\section{Contributions and Roadmap}\label{sec:contributions}
In this work, we derive a \emph{\increm{} quasi-Newton proximal method} (\ours{}) that never requires the evaluation of the full gradient. Moreover, our experiments demonstrate that \ours{} converges faster than the accelerated stochastic proximal method (ASPM) and the variance-reduced quasi-Newton proximal method \cite{wang2019stochastic}, both in terms of iterations and wall time. Compared to first-order proximal methods, QNPMs require computing a weighted proximal mapping~(WPM)\footnote{The WPM is defined in \Cref{sec:subsectionWPMProperties}.} at each iteration, which can be as challenging as the original problem. When \MajRev{$h=\mathrm{TV}$}, the authors in \cite{ge2020proximal,kadu2020high} computed the WPM using first-order methods such as FISTA or primal-dual methods.
Their algorithm involves inner and outer iterations, which adds to the global complexity (\textit{i.e.,} a three-layered iterative optimization). Leveraging the dual formulation of TV in a manner similar to the seminal work of Beck and Teboulle~\cite{beck2009fastTV}, we adapt the \MajRev{fast dual projected-gradient method (FDPGM) \cite{polyak2013dual}} to compute the WPM. This avoids the embedding of additional iterative algorithms and ensures fast convergence. Although the methodology for the proposed computation of WPM is similar to that described in~\cite{hong2024complex}, 
working under a set of \MinRev{constraints} $\vx\in\mathcal C$ requires computing an additional WPM to obtain the gradient in FDPGM at each iteration. By using the structure of the estimated Hessian matrices in \ours{}, we show that the additional WPM can be computed with negligible cost. Note that the images of interest are 3D. Therefore, to reduce memory usage when estimating the Hessian matrices, we employ a memory-efficient \MajRev{symmetric rank-$1$ (SR1) method \cite{becker2019quasi}}. Moreover, we analyze the convergence of \ours{} in the \emph{nonconvex} setting. Our 3D ODT experiments demonstrate that the proposed method requires fewer iterations and less computational time than first-order approaches to achieve high-quality reconstructions. These results indicate that our method is well suited for large-scale and nonlinear inverse problems. We further provide empirical evidence supporting our theoretical analysis through comprehensive numerical experiments. Although we only discussed TV regularization in this paper, \ours{} can be extended to broader regularizers, \textit{e.g.}, the Hessian-Schatten norm~\cite{lefkimmiatis2013hessian}.

In summary, the main contributions of our paper are given as follows:
\begin{itemize}
    \item We propose a \emph{\increm{} quasi-Newton proximal method}, in which the computation at each iteration is independent of the number of measurements. Moreover, our method does not require evaluating the full gradient at any iteration.
    \item We introduce an efficient approach to compute the WPM when considering a constrained TV regularizer. Furthermore, we adapt a memory-efficient SR1 method for Hessian estimation to reduce memory usage.
    \item We analyze the convergence properties of \ours{} \MajRev{in the nonconvex setting} and extensively test its performance on simulated and real data, as well as empirically validate our theoretical results.
\end{itemize}

The paper is organized as follows: \Cref{sec:preliminaries} introduces the notation and relevant preliminaries. \Cref{sec:IncrementalWPM} derives the proposed \ours{} and presents some implementation details. The convergence analysis of \ours{} is summarized in \Cref{sec:ConvAnalysis}. \Cref{sec:NumericalExperiments} studies the performance of \ours{} on three-dimensional inverse-scattering problems with simulated and real data. \MajRev{The conclusion is provided in \Cref{sec:conclusionAndfuture}.}
 
 \section{Preliminaries}
\label{sec:preliminaries}
In this section, we introduce the necessary \MinRev{notations} and present the discretized form of TV along with its dual representation. We then define the WPM, outline its key properties, and introduce a useful theorem.

\subsection{Notation}
Throughout the paper, vectors and matrices are represented in upright bold font.  \MajRev{Given $\umX\in\mathbb R^{N\times N}$, the notation $\umX\succ 0$ denotes that $\umX$ is a symmetric positive definite matrix.} For $\umX_1,\umX_2\in\mathbb R^{N\times N}$, $\umX_1\succeq\umX_2$ implies that $\umX_1-\umX_2$ is symmetric positive semidefinite.
 The \MajRev{$n$-th} element of a vector $\uvx\in \mathbb R^N$ is represented as $x_n$. \MajRev{The $n$-th column of a matrix $\umX\in \mathbb R^{D\times N}\,(D\geq 1)$ is denoted by $\uvx_n$.} The $(N\times N)$ identity matrix is denoted by $\umI_{N}$.
The notation $\langle \cdot,\cdot \rangle$ stands for the inner product. \MajRev{Let $\uvx \in \mathbb R^N$ be the vectorization of a $D$-dimensional array, so that $N=\Pi_{d=1}^D R_{d}$, where $R_d\geq 1$ denotes the number of elements for $d$-th dimension. For any $d\in[1,\cdots,D]$ we define the finite-difference matrix $\umD^{d}\in \mathbb R^{N\times N}$ such that the $(i,j)$-th component corresponds to $-\delta[i-j]+\delta[i-j-R_{d}]$, where $r\in\mathbb Z,\,\delta[r]=1$ if $r=0,$ and $0$ otherwise. The expectation is denoted by $\mathbb{E}[\cdot]$. The associated probability distributions will be specified when needed. \MinRev{The conditional expectation is denoted by $\mathbb{E}[\cdot \mid \uvx]$, where $\uvx$ is the conditioning variable.}
}



\subsection{Discretized Total Variation}
\label{sec:pre:definationTV}
We present two popular variants of TV: isotropic and anisotropic \cite{rudin1992nonlinear}.
The isotropic discretized total variation of ${\mathbf{x}}$ is defined as
\MajRev{\begin{equation}
\label{eq:TViso}\mrm{TV}_{{\mathrm{iso}}}({\mathbf{x}})={\mathrm{tr}}\left(\sqrt{\sum_{d=1}^{D}\,\left({\mathbf{D}}^{d}\,{\mathbf{x}}\right)\,\left({\mathbf{D}}^{d}\,{\mathbf{x}}\right)^{{\Trans}}}\right),
\end{equation}}
while the anisotropic version is defined as
\begin{equation}
\label{eq:TVl1}\mrm{TV}_{\ell_{1}}({\mathbf{x}})={\mathrm{tr}}\left(\sum_{d=1}^{D}\,\sqrt{\left({\mathbf{D}}^{d}\,{\mathbf{x}}\right)\,\left({\mathbf{D}}^{d}\,{\mathbf{x}}\right)^{{\Trans}}}\right),
\end{equation}
where $\Trans$ represents the transpose operator and $D$ represents the image dimension. In~\eqref{eq:TViso} and~\eqref{eq:TVl1}, the square root is applied component-wise.

\subsection{Dual Representation of Total Variation}
Using
$
\left\|{\mathbf{x}}\right\|=\max_{\uvz \in{\mathbb{R}}^{N},\left\|\uvz\right\|_* \leq1}\,\uvz^{{\Trans}}\,{\mathbf{x}},
$
where $\|\cdot\|_*$ denotes the dual norm of $\|\cdot\|$,
we rewrite~\eqref{eq:TViso} and~\eqref{eq:TVl1} as \cite{chambolle2004algorithm}
\begin{equation}
\label{eq:B&Tiso}
\mrm{TV}_{{\mathrm{iso}}}({\mathbf{x}})~=\max_{\substack{ {\mathbf{P}}\in{\mathbb{R}}^{D\times N} \\ \left\{\left\|{\mathbf{p}}_{n}\right\|_{\cb 2}\leq1\right\}_{n=1}^{N}}} \,{\mathbf{d}(\umP)}^{{\Trans}}\,{\mathbf{x}}
\end{equation}
and
\begin{equation}
\label{eq:B&Tl1}
\mrm{TV}_{\ell_{1}}({\mathbf{x}})~=\max_{\substack{{\mathbf{P}}\in{\mathbb{R}}^{D\times N}\\ \left\{\left\|{\mathbf{p}}_{n}\right\|_{\cb \infty}\leq1\right\}_{n=1}^{N}}}\,{\mathbf{d}(\umP)}^{{\Trans}}\,{\mathbf{x}},
\end{equation}
respectively. Further, the $\left(D\times N\right)$ matrix
$
{\mathbf{P}}=\left[{\mathbf{p}}_{1}\cdots{\mathbf{p}}_{N}\right]=\left[{\mathbf{q}}_{1}\cdots{\mathbf{q}}_{D}\right]^{{\Trans}}
$
contains the variables over which the maximization is performed. Furthermore, the vector-valued function ${\mathbf{d}}:\mathbb{R}^{D\times N}\rightarrow{\mathbb{R}}^{N}$ is given by
\MajRev{$
{\mathbf{d}(\umP)}=\sum_{d=1}^{D}\,\left({\mathbf{D}}^{d}\right)^{{\Trans}}\,{\mathbf{q}}_{d}.
$ Meanwhile, we define the adjoint function of $\uvd$ as $\uvd^\Trans$, 
which satisfies $\langle \uvx,\, \uvd(\umP) \rangle
= \langle \uvd^\Trans(\uvx),\, \umP \rangle.$}

\subsection{Weighted Proximal Mapping (WPM)}
\label{sec:subsectionWPMProperties}
In this part, we introduce the definition of  WPM and then discuss some key properties. 
\begin{definition}[Weighted proximal mapping]
Given a proper closed convex function \MajRev{$h$} and a symmetric positive definite matrix $\umW\in\mathbb{R}^{N\times N},~\umW\succ0$, the WPM associated with $h$ is defined as
\begin{equation}
\mrm{prox}_{h}^{\umW}(\uvx)=\arg\min\limits_{\uvu\in\mathbb R^N}\left(h(\uvu)+\frac{1}{2}\|\uvu-\uvx\|^2_{\umW}\right),\,\MajRev{\forall \uvx\in
\mathbb R^N,}
	\label{eq:def:WeightedProximal}
\end{equation} 
where $\|\uvx\|_{\umW}\triangleq{\sqrt{\uvx^\Trans\umW\uvx}}$ denotes the $\umW$-norm \MajRev{of $\uvx$}.
\end{definition}
Next, we outline some properties of \eqref{eq:def:WeightedProximal}:
\begin{enumerate}[label=\arabic*)]
\item The $\mrm{prox}_{h}^{\umW}(\uvx)$ exists and is unique for $\uvx\in\mathbb{R}^N$ since $h(\uvu)+\frac{1}{2}\|\uvu-\uvx\|^2_{\umW}$ is strongly convex.
	\item \MinRev{The generalized Moreau envelope of $h$ with respect to $\umW$ is defined as}
	$$
	\MinRev{\mathcal M_h^{\umW}}(\uvx)= \inf_{\vu\in\mathbb R^N} \left(h(\vu)+\frac{1}{2}\|\vu-\uvx\|_{\umW}^2\right).
	$$
	The function \MinRev{$\mathcal M_h^{\umW}$} on $\uvx\in \mathbb R^N$ is continuously differentiable with \MajRev{gradient~\cite[Theorem~5.30]{beck2017first}}
	\begin{equation}
	    {\pmb{ \nabla}_{\uvx}} \MinRev{\mathcal M_h^{\umW}} (\uvx)=\umW \left(\uvx-\mrm{prox}_{h}^{\umW}(\uvx)\right),\,\MajRev{\forall \uvx\in\mathbb R^N,}
	    \label{eq:WPM:gradient}
	\end{equation}
	and Lipschitz constant $\sigma_{\umW}$, which is the largest eigenvalue of $\umW$.
\end{enumerate}
See~\cite{lee2014proximal,becker2019quasi} for further details on the WPM.


For $\umW=\umI_{N}$, WPM becomes the proximal mapping~\cite{parikh2014proximal} which has a closed-form solution for many popular~$h$ \cite[Chapter 6]{beck2017first}. 
Although this does not necessarily carry over to $\mrm{prox}_{h}^{\umW}(\uvx)$ with a generic~$\umW$,
the computation of $\mrm{prox}_{h}^{\umW}(\uvx)$ can be simplified by using  \Cref{them:structuredWPM:evaluation}
if $\umW={\cb \bm \Sigma}\pm\umU\umU^\Trans$, where ${\cb \bm \Sigma}\in\mathbb R^{N\times N}$ is a diagonal matrix and $\umU\in\mathbb R^{N\times r}$ is $\mbox{rank-r}$ matrix with $r\ll N$. 
\begin{theorem}\cite[Theorem 3.4]{becker2019quasi}
\label{them:structuredWPM:evaluation}
	 Let $\umW={\cb \bm \Sigma}\pm\umU\umU^\Trans$, $\umW\in\mathbb R^{N\times N},\,\umW \succ 0$, and $\umU\in\mathbb R^{N\times r}$. Then, it holds that 
	 \begin{equation}
	 \label{eq:WPM:structure:equiv}
	 	 \mrm{prox}_{h}^{\umW}(\uvx)=\mrm{prox}_{h}^{{\cb \bm \Sigma}}(\uvx\mp{\cb \bm \Sigma}^{-1}\umU\bm \beta^*),\,\MajRev{\forall \uvx\in
\mathbb R^N,}
	 \end{equation}
	 where $\bm\beta^*\in\mathbb R^r$ is the unique solution of the nonlinear system of equation
	 \begin{equation}
\label{eq:WPM:structure:equiv:nonlinearSystems}
	 	 \underbrace{\umU^\Trans\left(\uvx-\mrm{prox}_{h}^{{\cb \bm \Sigma}}\left(\uvx\mp{\cb \bm \Sigma}^{-1}\umU\bm\beta\right)\right)+\bm\beta}_{\varphi(\bm\beta)} =\bm 0.
	 \end{equation}
\end{theorem}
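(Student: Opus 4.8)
The plan is to collapse the $N$-dimensional weighted prox into an ordinary $\bm\Sigma$-weighted prox plus an $r$-dimensional root-finding problem, exploiting that $\umU\umU^\mathsf{T}$ has rank $r\ll N$. Throughout I assume $\bm\Sigma\succ0$ (standard for the diagonal scaling, and automatic in the ``$-$'' case since then $\bm\Sigma=\umW+\umU\umU^\mathsf{T}\succ0$), so that $\bm\Sigma^{-1}$ and $\mrm{prox}_g^{\bm\Sigma}$ are well defined. Setting $\uvv=\uvu-\uvx$ and using $\umW=\bm\Sigma\pm\umU\umU^\mathsf{T}$, the objective in \eqref{eq:def:WeightedProximal} reads $g(\uvu)+\tfrac12\|\uvv\|_{\bm\Sigma}^2\pm\tfrac12\|\umU^\mathsf{T}\uvv\|^2$, so the only coupling across coordinates is the rank-$r$ term $\pm\tfrac12\|\umU^\mathsf{T}\uvv\|^2$.

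First I would linearize that term by Fenchel duality, introducing $\bm\beta\in\mathbb R^r$:
\begin{equation*}
\tfrac12\|\umU^\mathsf{T}\uvv\|^2=\max_{\bm\beta}\Big(\bm\beta^\mathsf{T}\umU^\mathsf{T}\uvv-\tfrac12\|\bm\beta\|^2\Big),\qquad -\tfrac12\|\umU^\mathsf{T}\uvv\|^2=\min_{\bm\beta}\Big(\tfrac12\|\bm\beta\|^2-\bm\beta^\mathsf{T}\umU^\mathsf{T}\uvv\Big).
\end{equation*}
This turns the WPM into a saddle problem $\min_{\uvu}\max_{\bm\beta}$ in the ``$+$'' case and a joint minimization $\min_{\uvu,\bm\beta}$ in the ``$-$'' case. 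I would then interchange the optimization over $\uvu$ and $\bm\beta$: for ``$-$'' this is immediate (both are minimizations), while for ``$+$'' it is licensed by Sion's minimax theorem, since the $\bm\beta$-objective is strongly concave (hence has compact superlevel sets) and the $\uvu$-objective is closed convex and coercive.

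Next I would solve the inner problem over $\uvu$ for fixed $\bm\beta$. Completing the square in the $\bm\Sigma$-inner product absorbs the bilinear coupling $\mp\langle\umU\bm\beta,\uvu-\uvx\rangle$ into the center of the quadratic, so that
\begin{equation*}
\arg\min_{\uvu}\Big(g(\uvu)+\tfrac12\|\uvu-\uvx\|_{\bm\Sigma}^2\mp\langle\umU\bm\beta,\uvu-\uvx\rangle\Big)=\mrm{prox}_g^{\bm\Sigma}\big(\uvx\mp\bm\Sigma^{-1}\umU\bm\beta\big),
\end{equation*}
which is exactly the claimed identity \eqref{eq:WPM:structure:equiv}, evaluated at the optimal $\bm\beta^\star$. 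The optimality condition for the remaining problem in $\bm\beta$ reads $\umU^\mathsf{T}(\uvu^\star(\bm\beta)-\uvx)=\bm\beta$ in both cases (by Danskin's theorem for ``$+$'', and by stationarity of the joint minimization for ``$-$''); substituting $\uvu^\star(\bm\beta)=\mrm{prox}_g^{\bm\Sigma}(\uvx\mp\bm\Sigma^{-1}\umU\bm\beta)$ and rearranging gives precisely $\varphi(\bm\beta)=\umU^\mathsf{T}\big(\uvx-\mrm{prox}_g^{\bm\Sigma}(\uvx\mp\bm\Sigma^{-1}\umU\bm\beta)\big)+\bm\beta=\bm0$, i.e.\ \eqref{eq:WPM:structure:equiv:nonlinearSystems}.

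It remains to establish uniqueness of $\bm\beta^\star$. Stacking $(\uvv,\bm\beta)$, the quadratic part of the joint objective in the ``$-$'' case has Hessian $\bmat \bm\Sigma & -\umU \\ -\umU^\mathsf{T} & \umI_r\emat$, which is positive definite because its Schur complement with respect to the $\umI_r$ block equals $\bm\Sigma-\umU\umU^\mathsf{T}=\umW\succ0$; hence the joint objective is strongly convex and $(\uvu^\star,\bm\beta^\star)$, in particular $\bm\beta^\star$, is unique. In the ``$+$'' case the value function obtained by minimizing out $\uvu$ is strongly concave in $\bm\beta$, since the $-\tfrac12\|\bm\beta\|^2$ term dominates a concave (pointwise-infimum) function of $\bm\beta$, so its maximizer $\bm\beta^\star$ is again unique. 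The hard part will be handling the two signs on the same footing without sign slips and, in the ``$+$'' case, rigorously justifying the min--max interchange and strong duality. Equivalently, one may bypass the minimax step entirely and verify directly that $\varphi$ is strongly monotone: its (generalized) Jacobian is $\umI_r\mp\umU^\mathsf{T}(\bm\Sigma+\partial^2 g)^{-1}\umU$, which is $\succ0$ because $\mrm{prox}_g^{\bm\Sigma}$ is firmly nonexpansive in the $\bm\Sigma$-metric (so $\umU^\mathsf{T}(\bm\Sigma+\partial^2 g)^{-1}\umU\succeq0$ for ``$+$'', and $\umI_r-\umU^\mathsf{T}\bm\Sigma^{-1}\umU\succ0\Leftrightarrow\umW\succ0$ bounds the subtracted term below $\umI_r$ for ``$-$''); this yields existence and uniqueness of the root of $\varphi$ in one stroke.
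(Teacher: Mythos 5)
Note first that the paper does not prove this statement at all: it is imported verbatim as \cite[Theorem 3.4]{becker2019quasi}, so there is no in-paper proof to compare against. Your derivation is sound and, as it happens, follows essentially the same route as the proof in that reference: express the rank-$r$ perturbation $\pm\tfrac12\|\umU^\mathsf{T}(\uvu-\uvx)\|^2$ through its (Fenchel) quadratic representation in an auxiliary $\bm\beta\in\mathbb R^r$, swap the order of optimization, solve the inner $\uvu$-problem in closed form as $\mrm{prox}_g^{\bm\Sigma}(\uvx\mp\bm\Sigma^{-1}\umU\bm\beta)$, and read off \eqref{eq:WPM:structure:equiv:nonlinearSystems} as the stationarity condition in $\bm\beta$; your uniqueness arguments (Schur complement $\bm\Sigma-\umU\umU^\mathsf{T}=\umW\succ0$ for the ``$-$'' case, strong concavity of the value function for the ``$+$'' case, or alternatively strong monotonicity of $\varphi$) are all valid. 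Two cosmetic sign slips to fix in a full write-up: the bilinear coupling produced by the duality step is $\pm\langle\umU\bm\beta,\uvu-\uvx\rangle$ (not $\mp$) if the prox argument is to come out as $\uvx\mp\bm\Sigma^{-1}\umU\bm\beta$, and correspondingly the generalized Jacobian of $\varphi$ is $\umI_r\pm\umU^\mathsf{T}(\bm\Sigma+\partial^2 g)^{-1}\umU$ (not $\mp$) --- your own parenthetical justification of positive definiteness already presupposes the corrected signs, so the substance of the argument is unaffected.
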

Since ${\cb \bm \Sigma}$ is a diagonal matrix, computing $\mrm{prox}_{h}^{{\cb \bm \Sigma}}(\uvx)$ is as straightforward as the proximal mapping associated with $h$. 
To solve \eqref{eq:WPM:structure:equiv:nonlinearSystems}, we employ a semi-smooth Newton method~\cite{qi1999survey} because $r$ is small. In practice, we find that a few iterations are sufficient to obtain an accurate solution. \Cref{sec:proposed:sub:impDetails} provides more details about the implementation of the semi-smooth Newton method.

\section{Proposed Mini-Batch Quasi-Newton Proximal Method}
\label{sec:IncrementalWPM}
In this section, we first review the full batch quasi-Newton proximal method (\oursF{}) for solving \eqref{eq:defMainPro} and then present \ours{}. Splitting the index set $\{1,2,\ldots,L\}$ into $\Ksubset$ non-overlapping subsets $\{\mathscr{S}_s\}_{s=1}^\Ksubset$\footnote{\MajRev{In our current setting, we require each batch to contain non-overlapping subsets. However, the subsequent convergence analysis under Strategy~II does not rely on this assumption. It would therefore be interesting to investigate how the algorithm behaves when batches are allowed to contain overlapping subsets. We leave this direction for future work.}}, \MajRev{we rewrite \eqref{eq:defMainPro}} as
\begin{equation}
\min_{\uvx\in\mathcal C} \left(\frac{1}{ \Ksubset}\sum_{s=1}^\Ksubset F_s(\uvx)+ \bar{h}(\uvx)\right),\label{eq:NewModelPro}
\end{equation}
where $\bar{h}(\uvx)=\lambda\, h(\uvx)$ and \MajRev{$F_{s}(\uvx)= \frac{S}{L}\sum_{l\in\mathscr{S}_s} f_l(\uvx)$}. For the sake of brevity, we write $\sum_{s=1}^{\Ksubset}$ as $\sum_s$. At the \MajRev{$k$-th} iteration, \oursF{} obtains the next image iterate by solving a WPM:
\begin{equation}
\label{eq:oursF:WPM}
\uvx_k=\mathrm{prox}^{{\bar{\umH}_k}^{-1}}_{\stepsize \bar{h}+\iota_{\mathcal C}}\left(\uvx_{k-1}- \frac{\stepsize}{S}\bar{\umH}_k \sum_s \pmb{\nabla} F_s(\uvx_{k-1})\right),
\end{equation}
where $\bar{\umH}_k \in\mathbb R^{N\times N},~ \bar{\umH}_k\succ 0$ is the inversion of the estimated Hessian matrix at the \MajRev{$k$-th} iteration, $a_k$ is the stepsize, and $\iota_{\mathcal C}$ represents the characteristic function such that $\iota_{\mathcal C}(\uvx)=0,\,\uvx\in\mathcal C; +\infty,\,\uvx\notin \mathcal C$.
The techniques used in quasi-Newton methods for estimating Hessian matrices can be adapted here to estimate $\bar{\umH}_k$ \cite{nocedal2006numerical}.

Note that \eqref{eq:oursF:WPM} requires computing the full gradient, which can be prohibitively expensive for a large $L$. Indeed, in ODT, even computing $\pmb{\nabla} f_l$ is computationally expensive. To address this issue, we propose \ours{},  which computes the gradient of a single subset $\mathscr S_s$ at each iteration and estimates the Hessian matrices based on partial gradients. Moreover, \ours{} does not require computing the full gradient throughout the entire iteration.

\begin{algorithm}[t]        
\caption{Proposed \increm{} quasi-Newton proximal method (\ours{})}
\label{alg:IncrementalWPMs}                  
\begin{algorithmic}[1]
\REQUIRE Initial guess $\uvx_0\in\mathbb{R}^N$; tradeoff parameter $\lambda$; $\Ksubset$ subsets $\{\mathscr S_s\}_{s=1}^\Ksubset$; stepsize $\stepsize$; Lipschitz constants $\alpha_{s}$ of $F_{s} ~\text{for all}~{s}$; maximal number of iterations Max\_Iter
\lastcon $\uvx^*$
\STATE $k\leftarrow 1$
\FORALL{$k\leq\mrm{Max}\_\mrm{Iter}$}
\IF{$k\leq \Ksubset$} \label{alg:IncrementalWPMs:firstk_begin}
\STATE $s\leftarrow k$
\STATE Set $\uvx_s^k\leftarrow \uvx_{k-1},\,\uvg_s^k\leftarrow {\pmb{\nabla}} F_s (\uvx_{k-1}),\, \umB_s^k\leftarrow \alpha_s \umI_{N}$ \label{alg:IncrementalWPMs:firstk_setting}\\[5pt]
\STATE  {$\uvx_k \leftarrow \mathrm{prox}_{\stepsize\lambda\,\mathrm{TV}+\iota_{\mathcal C}}^{\umB_s^k}\left(\uvx_{k-1}-\stepsize (\umB_s^k)^{-1}\uvg_s^k\right)$
\label{alg:IncrementalWPMs:firstk_end}}
\ELSE

\STATE Pick $\{\uvg_s^k,\,\uvx_s^k,\,\umB_s^k\}_{s,k}$ (See \Cref{sec:IncrementalWPM:sub:choiceIterGradHess}) \label{alg:IncrementalWPMs:enterQN} \\[5pt]
\STATE $\sumBt\leftarrow \sum_s \umB_s^k$\\[5pt]
\STATE $\uvv_k\leftarrow \left(\sumBt\right)^{-1}\sum_s\left({\umB}_s^k \uvx_s^k - a_k\uvg_s^k \right)$\\[5pt]
\STATE $\uvx_k \leftarrow \mathrm{prox}_{\stepsize S\lambda\,\mathrm{TV}+\iota_{\mathcal C}}^{\umB^k} (\uvv_k)$
\label{alg:IncrementalWPMs:endQN}
\ENDIF 
\STATE {$k\leftarrow k+1$}
\ENDFOR
\RETURN $\uvx^*\leftarrow \uvx_{\mrm{Max}\_\mrm{Iter}}$
\end{algorithmic}
\end{algorithm}

For given $\uvx_s^k,\,\uvg_s^k,\,\umB_s^k\succ 0$, we define 
\begin{equation}
\label{eq:TaylorAppSubFunc}
\bar F_s^k(\uvx)=F_s(\uvx_s^k)+\left\langle \uvg_s^k, \uvx-\uvx_s^k\right\rangle +\frac{1}{2\stepsize}\|\uvx-\uvx_s^k\|^2_{\umB_s^k},
\end{equation}
as the local second-order Taylor approximation of $F_s(\uvx)$ at the \MajRev{$k$-th} iteration. Then, at iteration~$k > \Ksubset$, \ours{} computes $\uvx_{k}$ by solving the following minimization problem: 
\begin{equation}
\label{eq:SecondProximal_TV} 
\uvx_{k} = \arg\min\limits_{\uvx\in\mathcal C} \left(\frac{1}{\Ksubset}\sum_s \bar F_s^k(\uvx)+ \bar{h}(\uvx)\right).
\end{equation}
Rewriting the quadratic and linear terms in $\uvx-\uvx_s^k$ of \eqref{eq:TaylorAppSubFunc}, we recast \eqref{eq:SecondProximal_TV} as a WPM\footnote{\MajRev{\Cref{app:deduce:aylorAppSubFuncToSecondProximal_TV} presents the detailed derivation from \eqref{eq:SecondProximal_TV} to \eqref{eq:SecondProximal_WPMTV}}.}
\begin{equation}
\uvx_{k} =
\arg\min\limits_{\uvx\in\mathcal C} \left(\frac{1}{2}\|\uvx-\uvv_k\|_{\sumBt}^2+a_k\Ksubset\lambda\, \mrm{TV}(\uvx)\right),
\label{eq:SecondProximal_WPMTV}
\end{equation}
where $\sumBt=\sum_s \umB_s^k$ and $\uvv_k=\left(\sumBt\right)^{-1}\sum_s\left({\umB}_s^k \uvx_s^k - a_k\uvg_s^k \right)$. Since this paper mainly focuses on the TV regularizer, we replace $h(\uvx)$ with $\mathrm{TV}(\uvx)$.  \Cref{alg:IncrementalWPMs} summarizes the detailed steps of \ours{}.  For $k\leq S$, we set $\umB_s^k=\alpha_s\umI_N$. So computing $\uvx_k$ at step \ref{alg:IncrementalWPMs:firstk_end} of \Cref{alg:IncrementalWPMs} reduces to the proximal mapping, which can be efficiently solved using the FDPGM~\cite{beck2009fastTV}. For $k>S$, in general, $\umB_s^k\neq \umI_N$, since we will use second-order information. Therefore, it is crucial to efficiently compute a nontrivial WPM -- specifically step \ref{alg:IncrementalWPMs:endQN} of \Cref{alg:IncrementalWPMs} -- to further reduce the overall computational cost. \MajRev{In the} following, we propose an efficient approach to compute the related WPM for a special class of $\{\umB_s^k\}_{s,k}$. 

\subsection{Efficient Computation of the WPM}
\label{sec:IncrementalWPM:sub:eff:WPM}
Inspired by~\cite{beck2009fastTV}, we compute the WPM at step \ref{alg:IncrementalWPMs:endQN} of \Cref{alg:IncrementalWPMs} using its dual formulation, with computational complexity comparable to that of the proximal mapping when  $\{\umB_s^k\}_{s,k}$ shares the same structure as $\umW$ in \Cref{them:structuredWPM:evaluation}. Invoking \eqref{eq:B&Tiso} or \eqref{eq:B&Tl1}, we recast \eqref{eq:SecondProximal_WPMTV} as
\begin{equation}
\min\limits_{\uvx\in\mathcal C}\left( \max\limits_{\umP\in\mathcal P} \frac{1}{2}\|\uvx-\uvv_k\|_{\sumBt}^2 + a_k\Ksubset\lambda \uvd(\umP)^\Trans\uvx\right),
\label{eq:SecondProximal_NewTAPDual} 
\end{equation}
where $\mathcal{P}=\left\{\umP\in\mathbb{R}^{D\times N}:\left\{\left\|{\mathbf{p}}_{n}\right\|_2\leq1\right\}_{n=1}^N\right\}$ for the isotropic TV.\footnote{For the anisotropic TV, we have $\mathcal{P}=\left\{\umP\in\mathbb{R}^{D\times N}:\left\{\left\|{\mathbf{p}}_{n}\right\|_{\cb \infty}\leq 1\right\}_{n=1}^N\right\}$.} 
Reorganizing \eqref{eq:SecondProximal_NewTAPDual}, we obtain
\begin{equation}
\begin{array}{cl}
\min\limits_{\uvx\in\mathcal C} \max\limits_{\umP\in\mathcal P}&\left\|\uvx-\uvw_k(\umP)\right\|_{\sumBt}^2-\left\|\uvw_k(\umP)
\right\|_{\sumBt}^2,
\label{eq:SecondProximalMatrixnormV1}
\end{array}
\end{equation}
where $\uvw_k(\umP)=\uvv_k-a_k \Ksubset\lambda\left(\sumBt\right)^{-1}\uvd(\umP)$.
Since \eqref{eq:SecondProximalMatrixnormV1} is convex in $\uvx$ and concave in $\umP$,\footnote{\MajRev{By considering only $\umP$, the cost function in \eqref{eq:SecondProximalMatrixnormV1} 
can be written as $\langle \umB^k \uvx,\, \uvw_k(\umP) \rangle$. 
Since the algebraic dependence of $\uvw_k(\cdot)$ on $\umP$ is linear, the resulting cost 
function with respect to $\umP$ is linear, and therefore both convex and concave.}} we interchange the $\min$ and $\max$ \MajRev{using the minimax theorem \cite{boyd2004convex}} and then rewrite it as:
\begin{equation}
\begin{array}{cl}
 \max\limits_{\umP\in\mathcal P}\min\limits_{\uvx\in\mathcal C}&\left\|\uvx-\uvw_k(\umP)\right\|_{\sumBt}^2-\left\|\uvw_k(\umP)
\right\|_{\sumBt}^2.
\label{eq:SecondProximalMatrixnormV1:interchange} 
\end{array}
\end{equation}
Note that $\uvx$ only appears in the first term of \eqref{eq:SecondProximalMatrixnormV1:interchange}. So the optimal solution of $\uvx$ in \eqref{eq:SecondProximalMatrixnormV1:interchange} is 
\begin{equation}
	\mrm{prox}^{\sumBt}_{\iota_{\mathcal C}}\left(\uvw_k(\umP)\right).
	\label{eq:SecondProximalMatrixnorm:OptSolutionInnerPro}
\end{equation}

By substituting \eqref{eq:SecondProximalMatrixnorm:OptSolutionInnerPro} into \eqref{eq:SecondProximalMatrixnormV1:interchange}, we derive \eqref{eq:SecondProximalMatrixnormV2_New}, which depends only on $\umP$:
\begin{equation}
\label{eq:SecondProximalMatrixnormV2_New} 
 \umP^*=\arg\min\limits_{\umP\in\mathcal P} \MajRev{T(\umP),\,\,T(\umP)\triangleq}\left\|\uvw_k(\umP)\right\|^2_{\sumBt}-\|\uvw_k(\umP)-\mrm{prox}^{\sumBt}_{\iota_{\mathcal C}}\left(\uvw_k(\umP)\right)\|_{\sumBt}^2.
\end{equation}
\MajRev{By invoking \Cref{lemma:gradient:LipschitzConstant:dualPro}, which provides the gradient and 
Lipschitz constant of $T(\umP)$, we directly apply APM to solve 
\eqref{eq:SecondProximalMatrixnormV2_New}. After solving \eqref{eq:SecondProximalMatrixnormV2_New}, we get $\uvx_{k}=\mrm{prox}^{\sumBt}_{\iota_{\mathcal C}}\left(\uvw_k(\umP^*)\right).$} The proof of \Cref{lemma:gradient:LipschitzConstant:dualPro} is given in \Cref{app:proof:gradient:LipschitzConstant:dualPro}.
\begin{lemma}
	\label{lemma:gradient:LipschitzConstant:dualPro}
\MinRev{ 
For all $\umP \in \mathbb{R}^{D \times N}$, the gradient of $T$ in  \eqref{eq:SecondProximalMatrixnormV2_New}  with respect to $\umP$ is given by
\begin{equation}
	\label{eq:SecondProximalMatrixnormV2_New:gradient}
\vnabla T(\umP) =-2a_k\Ksubset\lambda
	\uvd^\Trans\Big(\mrm{prox}^{\sumBt}_{\iota_{\mathcal C}}\left(\uvw_k(\umP)\right)\Big),
\end{equation}
}
with Lipschitz constant \MajRev{$8Da_k^2\Ksubset^2
\lambda^2(\omega_{\mrm{min}}^k)^{-1}$}, where $\omega_{\mrm{min}}^k$ is the smallest eigenvalue of $\sumBt$ and $D$ is the dimension of the image.
\end{lemma}
\
\begin{remark}
	In view of \eqref{eq:SecondProximalMatrixnormV2_New:gradient}, computing~$\uvw_k(\umP)$ and $\mrm{prox}^{\sumBt}_{\iota_{\mathcal C}}\left(\uvw_k(\umP)\right)$ are the most computationally expensive parts.  However, by choosing $\umB_s^k$ to have the same structure as $\umW$ in \Cref{them:structuredWPM:evaluation}, we can compute $\uvw_k(\umP)$ and $\mrm{prox}^{\sumBt}_{\iota_{\mathcal C}}\left(\uvw_k(\umP)\right)$ efficiently, as discussed in \Cref{sec:proposed:sub:impDetails}. 
\end{remark}

\subsection{Setting \texorpdfstring{$\{\uvx_s^k,\,\uvg_s^k,\,\umB_s^k\}_{s,k}$}{TEXT}}
\label{sec:IncrementalWPM:sub:choiceIterGradHess}
In this section, we discuss the choice of $\{\uvx_s^k,\,\uvg_s^k,\,\umB_s^k\}_{s,k}$ when $k>S$ such that, at each iteration, we only compute $\vnabla F_s(\uvx)$ for one selected subset $s$. Choosing $\uvx_s^k =\uvx_{k-1}$, $\uvg_s^k=\vnabla F_s(\uvx_{k-1}),\,\text{\MajRev{for all}}~s$, and $\umB_1^k=\umB_2^k=\cdots=\umB_S^k$ at the \MajRev{$k$-th} iteration, we simply recover \oursF{}. \MajRev{In the following, we present two strategies for choosing $\{\uvx_s^k,\uvg_s^k\}_{s,k}$ when $k>S$, such that only one subset gradient is computed at each iteration.}

{\emph{Strategy I:}} At the \MajRev{$k$-th} iteration, we compute the gradient for \MajRev{the $s'$-th} subset such that $s'=\mathrm{mod}(k,S)$ and then set $\uvx_s^k=\uvx_{k-1}$ and $\uvg_s^k=\vnabla F_{s'}(\uvx_{k-1})$ \MinRev{for all $s$}.

{\emph{Strategy II:}} At the \MajRev{$k$-th} iteration, we uniformly sample a subset $s'$ among the $S$ subsets to compute $\vnabla F_{s'}(\uvx_{k-1})$ and then assign $\uvx_s^k=\uvx_{k-1}$ and $\uvg_s^k=\vnabla F_{s'}(\uvx_{k-1})$ \MinRev{for all $s$}.


%
%
%
\begin{algorithm}[t]        
\caption{SR1 estimation: $\umB_s^k$} 
\label{alg:WeightingSR1}                  
\begin{algorithmic}[1]
\REQUIRE $\uvs_s^k$, $\uvm_s^k$, $\gamma\in(0,1)$, $\alpha_s>0$, \MinRev{$\delta_1=10^{-8}$, $\delta_2=10^{-12}$,} \MajRev{and $0<\tau_{\max}<\infty$}
\lastcon $\umB_s^k,\,\tau_s^k,\,\uvu_s^k$
\STATE {$\tau_s^k \leftarrow \frac{\langle\uvm_s^k,\uvm_s^k\rangle}{\gamma\langle\uvs_s^k,\uvm_s^k\rangle}$} \label{alg:WeightingSR1:initialization} \\[5pt]
\STATE \MajRev{{$\tau_s^k\leftarrow \min(\tau_s^k,\tau_{\max})$}\label{alg:WeightingSR1:project:tau}}
\IF {\MajRev{$\tau_s^k\leq 0$}}
\STATE $\tau_s^k\leftarrow \alpha_s$
\STATE $\uvu_s^k\leftarrow \bm 0$
\ELSE
\IF{$\langle\uvm_s^k-\tau_s^k\, \uvs_s^k,\uvs_s^k \rangle\leq \MinRev{\delta_1}\|\uvs_s^k\|_2\|\uvm_s^k-\tau_s^k\, \uvs_s^k\|_2$}
\STATE $\uvu_s^k\leftarrow \bm 0$
\ELSE
\MajRev{
\WHILE{$\langle\uvm_s^k-\tau_s^k\, \uvs_s^k,\uvs_s^k \rangle\leq \MinRev{\delta_2} \|\uvs_s^k\|_2^2$} \label{alg:WeightingSR1:damp:tau}
\IF {$\tau_s^k<\MinRev{\delta_2}$}
\STATE $\tau_s^k\leftarrow \alpha_s$
\STATE $\uvu_s^k \leftarrow\bm 0$
\STATE break
\ELSE
\STATE $\tau_s^k\leftarrow \tau_s^k/2$
\STATE $\uvu_s^k \leftarrow \frac{\uvm_s^k-\tau_s^k\,\uvs_s^k}{\sqrt{\langle\uvm_s^k-\tau_s^k\, \uvs_s^k,\uvs_s^k\rangle}}$
\ENDIF
\ENDWHILE
}
\ENDIF
\ENDIF
\STATE $\umB_{s,k}^0 \leftarrow \tau_s^k\umI_N$
\STATE {$\umB_s^k \leftarrow \umB_{s,k}^0 +\uvu_s^k(\uvu_s^k)^\Trans$}
\end{algorithmic}
\end{algorithm}

Along with $\{\uvx_s^k,\uvg_s^k\}_{s,k}$, we define another pair,  $\{\bar{\uvx}_s^k,\bar{\uvg}_s^k\}_{s,k}$. For $k\leq S$, we set $\{\bar{\uvx}_s^k,\bar{\uvg}_s^k\}_{s,k}$ to be identical to $\{\uvx_s^k,\uvg_s^k\}_{s,k}$. For $k>S$, at the \MajRev{$k$-th} iteration, we assign $\bar{\uvx}_{s'}^k=\uvx_{k-1}$ and $\bar{\uvg}_{s'}^k=\vnabla F_{s'}(\uvx_{k-1})$ for the chosen \MajRev{$s'$-th} subset. For $s\neq s'$, we set $\bar{\uvx}_s^k=\bar{\uvx}_s^{\,k-1}$ and $\bar{\uvg}_s^k=\bar{\uvg}_s^{\,k-1}$. Denote by $\uvs_s^k = \bar{\uvx}_s^k-\bar{\uvx}_s^{\,i_k^*}$ and $\uvm_s^k =\bar{\uvg}_s^k-\bar{\uvg}_s^{\,j_k^*}$ where $i_k^* = \max_{i<k} \{i\,|\,\bar{\uvx}_s^i\neq \bar{\uvx}_s^k\}$ and $j_k^* = \max_{j<k} \{j\,|\,\bar{\uvg}_s^j\neq \bar{\uvg}_s^k\}$. \MajRev{In our construction of $\bar{\uvx}_s^k$ and $\bar{\uvg}_s^k$, we always have $i_k^* = j_k^*$ under both strategies for selecting $s'$.} With $\{\uvm_s^k,\uvs_s^k\}_{s,k}$, we deploy the symmetric-rank-$1$ (SR1) method~\cite{nocedal2006numerical} to estimate $\{\umB_s^k\}_{s,k}$ such that they hold the same structure as $\umW$ in~\Cref{them:structuredWPM:evaluation}. \Cref{alg:WeightingSR1} summarizes the steps of estimating $\umB_s^k$. The parameter $\alpha_s>0$ acts as the Lipschitz constant of $F_s$. \MajRev{We use $\tau_{\max}$ to prevent $\tau_s^k $ from becoming infinite. The condition at step~\ref{alg:WeightingSR1:damp:tau} in~\Cref{them:structuredWPM:evaluation} enforces a uniform curvature that enables an upper bound on the estimated Hessian (cf.~\Cref{App:sec:Proof:lemma:boundedHessSR1}).} The classical SR1 method uses the previously estimated Hessian matrix with a \mbox{rank-1} correction. Here, by contrast, we enforce that~$\umB_s^k = \tau_s^k \umI_N + \uvu_s^k(\uvu_s^k)^{\Trans}$ to save memory usage. Note that $\tau_s^k$ is a scalar.

\subsection{Implementation Details}
\label{sec:proposed:sub:impDetails}
This part discusses how to compute  $\mrm{prox}^{\sumBt}_{\iota_{\mathcal C}}\left(\uvw_k(\umP)\right)$ and $\uvw_k(\umP)$ efficiently. To compute $\uvw_k(\umP)$, we have to invert~$\sumBt$.
Since $\{\umB_s^k\}_s$ is a set of rank-$1$ corrected matrices, we have that
\begin{equation*}
    \sumBt={\cb \bm \Sigma}_k+\umU_k\umU_k^\Trans,
\end{equation*}
where $\umU_k=[\uvu_1^k\;\uvu_2^k \cdots\uvu_{\Ksubset}^k]\in \mathbb R^{N\times \Ksubset}$ and ${\cb \bm \Sigma}_k= \tau_k^*\umI_{N}$ with \mbox{$ \tau_k^*=\sum_s \tau_s^k$}.
Using the Woodbury matrix identity, we derive
$$\left(\sumBt\right)^{-1}=(\tau_k^*)^{-1}\umI_{N}- (\tau_k^*)^{-2}\umU_k\left(\umI_{\Ksubset}+\frac{\umU_k^{\Trans}\umU_k}{ \tau_k^*}\right)^{-1}\umU_k^{\Trans},$$ so that $\left(\sumBt\right)^{-1}$ is easily applied.

Using the structure of $\umB_s^k$ and~\Cref{them:structuredWPM:evaluation}, we can compute~$\mrm{prox}^{\sumBt}_{\iota_{\mathcal C}}\left(\uvx\right)$ efficiently. Note that computing~$\mrm{prox}^{\sumBt}_{\iota_{\mathcal C}}\left(\uvx\right)$ requires to solve a nonsmooth and nonlinear equation, \emph{i.e.}~\eqref{eq:WPM:structure:equiv:nonlinearSystems}. Compared to the size of image~$N$, $\Ksubset$ is small. Therefore, we adopt the semi-smooth Newton method~\cite{qi1999survey}. Let 
$
\mrm{dom}_\varphi=\left\{\bm\beta\in\mathbb R^\Ksubset~|~\varphi(\bm\beta) ~\text{is differentiable at} ~\bm\beta\right\}.
$
Then, the generalized Jacobian of $\varphi$ at $\bm\beta$ is defined by 
$
\partial \varphi(\bm\beta) = \mrm{conv}\,\partial_{\mrm{dom}_\varphi} \varphi(\bm\beta), 
$
where 
\MinRev{$\partial_{\mrm{dom}_\varphi} \varphi(\bm\beta)=\left\{\lim_ {\substack{\bm\beta_i\rightarrow \bm\beta\\ \bm\beta_i\in {\mrm{dom}_\varphi}}}\umJ \varphi(\bm\beta_i)\right\}$}, conv denotes the convex hull, and \MinRev{$\umJ \varphi(\bm\beta_i)$ denotes the corresponding Jacobian matrix}. With these definitions, at the \MajRev{$i$-th} iteration, the semi-smooth Newton method \cite{qi1999survey} updates $\bm\beta_i$ through
$
\bm\beta_i = \bm\beta_{i-1}-\umH_{i-1}^{-1}\varphi(\bm\beta_{i-1}),
$
where $\umH_{i-1}\in \partial \varphi(\bm\beta_{i-1})$. \Cref{alg:semismoothNewton} presents the implementation details of the semi-smooth Newton method.
In our experiments, 
\Cref{alg:semismoothNewton} reaches a small error tolerance~(\textit{e.g.}, $10^{-6}$) after few iterations. 

\begin{algorithm}[t]    
\caption{Semi-smooth Newton to solve $\varphi(\bm\beta)=\bm 0$} 
\label{alg:semismoothNewton}                  
\begin{algorithmic}[1]
\REQUIRE ~\\
Initial guess $\bm\beta_0$, tolerance $\epsilon$ (\textit{e.g.}, $10^{-6}$), maximal number of iterations Max\_Iter
\lastcon $\bm\beta^*$
\STATE $i \leftarrow 1$
\FORALL{$i \leq \mrm{Max\_Iter}$}
\IF {$\|\varphi(\bm\beta_{i -1})\|_2\leq \epsilon$}
\RETURN
\ELSE
\STATE Pick $\umH_{i -1}\in \partial \varphi(\bm\beta_{i -1})$
\STATE 
$
\bm\beta_i  \leftarrow \bm\beta_{i -1}-\umH_{i -1}^{-1}\varphi(\bm\beta_{i-1})
$
\ENDIF
\STATE $i \leftarrow i +1$
\ENDFOR
\RETURN $\bm\beta^* \leftarrow \bm\beta_i$
\end{algorithmic}
\end{algorithm}
%
\subsection{Discussion}
\label{sec:IncrementalWPM:sub:discussion}
Note that, in \Cref{alg:IncrementalWPMs}, the dominant computation of \ours{} at the \MajRev{$k$-th} iteration is the computation of $\vnabla F_{s'}(\uvx_{k-1})$ for the selected \MajRev{$s'$-th} subset and the related WPM. By using \Cref{alg:WeightingSR1}, the estimated Hessian shares the same structure as $\umW$ in \Cref{them:structuredWPM:evaluation}, enabling efficient solutions to the WPM as discussed in \Cref{sec:IncrementalWPM:sub:eff:WPM,sec:proposed:sub:impDetails}. Thus, computing $\vnabla F_{s'}(\uvx_{k-1})$ dominates the computational complexity in practice.

Next, we discuss the memory usage of \ours{}. Compared to ASPM, \ours{} requires storing $\{\uvx_s^k,\uvg_s^k,\umB_s^k\}_{s,k}$ and $\{\bar{\uvx}_s^k,\bar{\uvg}_s^k\}_{s,k}$. However, regardless of the strategy used to choose $s'$, $\{\uvx_s^k,\uvg_s^k\}_{s,k}$ can always be retrieved from $\{\bar{\uvx}_s^k,\bar{\uvg}_s^k\}_{s,k}$, meaning only $\{\bar{\uvx}_s^k,\bar{\uvg}_s^k\}_{s,k}$ needs to be stored. According to \Cref{alg:WeightingSR1}, it is sufficient to store $\{\uvu_s^k\}_s$ instead of $\{\umB_s^k\}_s$, which requires storing only $S$ additional images. Notice that we use $\{\bar{\uvx}_s^k,\bar{\uvg}_s^k\}_{s,k}$ to estimate the Hessian matrices which involves only the current $\bar{\uvx}_s^k,\bar{\uvg}_s^k$ and its most recent previous one for each subset. Once the Hessian is estimated, we only need to save the most recent $\bar{\uvx}_s^k,\bar{\uvg}_s^k$ for each subset, requiring an additional $2S$ images. In total, \ours{} requires storing an additional $3S$ images, which scales linearly with $S$ and is independent of the number of iterations.

\section{Convergence Analysis}
\label{sec:ConvAnalysis}
This section presents the convergence analysis of \ours{} without assuming convexity of $\{f_l\}_{l=1}^L$. Our analysis encompasses strategies I and II for selecting $\{\uvx_s^k,\uvg_s^k\}_{s,k}$. Before presenting our main convergence results, we introduce three assumptions used in our analysis, as stated in~\Cref{assum:LipFunGradHessian,assum:PLCond,assum:expectGrad}.

\begin{assumption}
\label{assum:LipFunGradHessian} \MinRev{Assume that $\Phi$ is bounded below, the regularizer term $h$ is convex but possibly nonsmooth, and  $F_s$ is twice continuously differentiable for all $s$}. Furthermore, we assume that $F_s$ satisfies the following properties for \MajRev{all $s$}.
\begin{itemize}
	\item[(a)] \label{assum:LipFunGradHessian:itemLipFun} \MajRev{$F_s$} is $\xi$-Lipschitz continuous, \MajRev{\emph{i.e.}, there exists a constant $\xi >0$ such that for all $\uvx_1,\,\uvx_2\in\mathbb R^N$},
	\begin{equation}
	\label{eq:funLip:l}
		\|F_s(\uvx_1)-F_s(\uvx_2)\|\leq \xi \, \|\uvx_1-\uvx_2\|.
	\end{equation}
	\item[(b)] \label{assum:LipFunGradHessian:itemLipGrad} The gradient of \MajRev{$F_s$} is $\kappa$-Lipschitz continuous, \MajRev{\emph{i.e.}, there exists a constant $\kappa>0$ such that for all $\uvx_1,\,\uvx_2\in\mathbb R^N$},
	\begin{equation}
	\label{eq:gradLip:l}
		\|\vnabla F_s(\uvx_1)-\vnabla F_s(\uvx_2)\|\leq \kappa \, \|\uvx_1-\uvx_2\|.
	\end{equation}
\end{itemize}
\end{assumption}
A direct conclusion of \Cref{assum:LipFunGradHessian} (a) and (b) is
	\begin{equation}
	\label{eq:BoundedHessiangrad:l}
		\|\vnabla F_s(\uvx)\|\leq \xi \text{~~and~~}\|\vnabla ^2 F_s(\uvx)\|\leq \kappa,\text{~for all~} s.
	\end{equation}
Since $\digamma(\uvx)=\frac{1}{S}\sum_s F_s(\uvx)$, it is easy to verify that \MajRev{$\digamma$ and $\vnabla \digamma$} are $\xi$- and  $\kappa$-Lipschitz continuous such that we have 
	\begin{equation}
	\label{eq:BoundedHessiangrad:Sum}
		\|\vnabla \digamma(\uvx)\|\leq \xi \text{~and~}\|\vnabla ^2 \digamma(\uvx)\|\leq \kappa.
\end{equation}
\begin{assumption}[\cite{karimi2016linear,wang2019stochastic}]
	\label{assum:PLCond}\MajRev{Assume that $\bar{h}$ is lower-bounded over set $\mathcal C$.
	Define 
\begin{equation}
\label{eq:def:D_h}
	\mathcal D_{\bar h}^{\mathcal C}(\uvx,\uvg,\umB,a)=-\frac{2}{a} \min_{\uvx'\in \mathcal C} \mathcal B_{\bar h}(\uvx',\uvx,\uvg,\umB,a),
\end{equation}
where $\mathcal B_{\bar h}(\uvx',\uvx,\uvg,\umB,a)=\langle \uvg,\uvx'-\uvx \rangle +\frac{1}{2a}\|\uvx'-\uvx\|_{\umB}^2+{\bar h}(\uvx')-{\bar h}(\uvx)$ and $a>0$.} Then \MajRev{$\Phi$ satisfies the Polyak-\L{}ojasiewicz inequality, in the sense   there exists a constant $\varrho>0$ such that}
\begin{equation}
\label{eq:PLIneq}
	\mathcal D_{\bar h}^{\mathcal C}(\uvx,\vnabla \digamma(\uvx),\umI_N,a)\geq 2\varrho (\Phi(\uvx)-\Phi^*), \forall \uvx \in \mathcal C,
\end{equation}
where $\Phi^*$ is \MajRev{the minimal value of \eqref{eq:defMainPro} over $\mathcal C$}.
\end{assumption}
Similar to \cite{karimi2016linear,wang2019stochastic}, we use \Cref{assum:PLCond} in our analysis, as it encompasses certain nonconvex settings. In general, the nonlinear Lippmann--Schwinger data-fidelity terms 
$\{f_l\}_{l=1}^L$ in ODT lead to an optimization problem that may contain 
spurious local minima and therefore do not satisfy the 
Polyak--\L{}ojasiewicz (PL) condition. Nevertheless, in our analysis we 
assume that the overall objective $\Phi$ satisfies the PL condition, 
which is a stronger geometric assumption and may still hold even when 
the individual terms $\{f_l\}_{l=1}^L$ do not satisfy it. Moreover, our 
experiments demonstrate that the reconstruction converges to 
high-quality solutions, suggesting that these spurious local minima do 
not pose a significant issue in our experimental settings. \MinRev{A more 
realistic and weaker assumption would be to adopt the 
Kurdyka--\L{}ojasiewicz (KL) property~\cite{attouch2010proximal}, which 
applies to a broader class of nonconvex problems. Nevertheless, the PL 
condition guarantees convergence to a global minimizer, whereas the general KL framework typically ensures convergence only to a critical point. In fact, the PL condition can be  viewed as a stronger special case of the KL property in the smooth setting. Extending our analysis to the broader KL framework is therefore an interesting direction for future work.} 
\MinRev{
\begin{assumption}
	\label{assum:expectGrad}
	The variable $s'$ is sampled uniformly from the $S$ subsets at each iteration. Then for all $k$, we assume
\begin{equation}
\label{eq:ExpGrad}
\mathbb{E}\left[\nabla F_{s'}(\mathbf{x}_k)\mid \mathbf{x}_k\right]
= \nabla \digamma(\mathbf{x}_k),~\text{and}~ \mathbb{E}\left[\|\nabla F_{s'}(\mathbf{x}_k)- \nabla \digamma(\mathbf{x}_k)\|^2\mid \mathbf{x}_k \right] \leq \sigma^2.
\end{equation}
\end{assumption}
}
Next, we present two lemmas to simplify the presentation of our convergence analysis.
\begin{lemma}
	\label{lemma:OptWPMIneq}
	If $\uvx_k$ is obtained by \eqref{eq:SecondProximal_TV} and $\stepsize>0$, then we have
	\begin{equation}
		\label{eq:WPMOptCond}
					\left \langle \stepsize\sum_s \uvg_s^k, \iterDiffk \right \rangle \leq \left \langle \sum_s \umB_s^k(\uvx_k-\uvx_s^k), -\iterDiffk \right\rangle  +\stepsize S \left(\bar{h}(\uvx_{k-1})-\bar{h}(\uvx_k) \right).
	\end{equation}
	where $\iterDiffk=\uvx_k-\uvx_{k-1}$.
\end{lemma}

\begin{lemma}
	\label{lemma:boundedHessSR1}
	Under \Cref{assum:LipFunGradHessian}, if $\{\umB_s^k\}_{s,k}$ are generated by \Cref{alg:WeightingSR1}, then there exist two positive constants $\underline{\kappa},\,\overline{\kappa}$ such that 
	$\underline{\kappa}\,\umI_N \preceq \umB_s^k \preceq\overline{\kappa}\,\umI_N$ \MajRev{for all $s,k$}.
	\end{lemma}
	The proofs of \Cref{lemma:OptWPMIneq,lemma:boundedHessSR1} are presented in \Cref{App:sec:Proof:lemma:OptWPMIneq,App:sec:Proof:lemma:boundedHessSR1}.
A direct conclusion from \Cref{lemma:boundedHessSR1} is 
$$\underline{\kappa}\,\umI_N \preceq \frac{1}{S}\umB^k \preceq  \overline{\kappa}\,\umI_N,$$
since $\umB^k = \sum_s \umB_s^k$.

\begin{theorem}
\label{them:ConvResults} 
Denote by $e^*=\frac{2(S-1)^2}{S^2}\xi^2$ and $c_k=1+\frac{\stepsize (1+\kappa)-\underline \kappa}{2\underline \kappa- (1+\kappa)\stepsize}$. Then we can establish the following convergence results for \ours{}:
	\begin{itemize}
		\item[(1)] Under \Cref{assum:LipFunGradHessian}, $\stepsize \in(0, \frac{2\underline{\kappa}}{1+\kappa})$, and running \ours{} $K$ iterations with strategy I, we have 
		$$
		\MajRev{\Delta_K^*} \leq \frac{\Phi(\uvx_0)-\Phi^*+Ke^*}{\sum_{k=1}^K \frac{2\underline{\kappa}-(1+\kappa)\stepsize}{2\stepsize}},		
		$$
where \MajRev{$\Delta_K^*=\min_{k\leq K} \|\iterDiffk\|_2^2$} with $\iterDiffk=\uvx_k-\uvx_{k-1}$ and $\Phi^*$ is \MinRev{the} minimal value of $\Phi$ over $\mathcal C$.

\item[(2)] Under \Cref{assum:LipFunGradHessian,assum:expectGrad}, $\stepsize \in(0, \frac{2\underline{\kappa}}{1+\kappa})$, and running \ours{} $K$ iterations with strategy II, we have 
\MinRev{
$$
\Delta^*_{K,\mathbb E}\leq \frac{\mathbb E\left[ \Phi(\uvx_0)-\Phi^*\right]+K\sigma^2/2}{\sum_{k=1}^K \frac{2\underline{\kappa}-(1+\kappa)\stepsize}{2\stepsize}},
$$
}
where \MinRev{$\Delta^*_{K,\mathbb E}=\min_{k\leq K} \mathbb E[\|\iterDiffk\|_2^2\mid \uvx_{k-1}]$}.

\item[(3)] Under \Cref{assum:LipFunGradHessian,assum:PLCond,assum:expectGrad}, 
let $\stepsize \in \big(\frac{\underline{\kappa}}{1+\kappa}, \frac{2\underline{\kappa}}{1+\kappa}\big)$. 
Running \ours{} for $K$ iterations with strategy II and sampling $k^*$ according to 
$\mathrm{Prob}\{k^*=k\}=\frac{\stepsize}{2\overline{\kappa}c_k K}$ yields
\MinRev{
\[
\mathbb{E}\!\left[\Phi(\uvx_{k^*})-\Phi^*\right]
\le 
\frac{\mathbb{E}\!\left[\Phi(\uvx_0)-\Phi^*\right]}{2\varrho K}
+ \frac{\sigma^2}{4\varrho}.
\]
}
	\end{itemize}
\end{theorem}
The proof of \Cref{them:ConvResults} is summarized in \Cref{App:sec:Proof:them:ConvResults}.  From \Cref{them:ConvResults} (1) and (2), if we choose the stepsize $a_k$ such that $\sum_{k=1}^K \frac{2\underline{\kappa}-(1+\kappa)\stepsize}{2\stepsize}\rightarrow \infty$ and $\frac{K}{\sum_{k=1}^K \frac{2\underline{\kappa}-(1+\kappa)\stepsize}{2\stepsize}} \leq \mathrm{Constant}$ as $K \rightarrow \infty$, then \MajRev{$\Delta_K^*$} and \MinRev{$\Delta^*_{K,\mathbb E}$} approach zero plus a constant. Therefore, \MajRev{$\Delta_K^*$} and \MinRev{$\Delta^*_{K,\mathbb E}$} are upper bounded, which implies the stability of the algorithm. Note that a simply constant stepsize policy can satisfy the requirement. \MajRev{In our numerical experiments, we observed that $e_k^2$ tended to zero, implying the value of $\Delta_K^*$ is always well bounded.} \MinRev{The bound of $\Delta^*_{K,\mathbb{E}}$ depends on the variance of the gradient estimation.} Note that \Cref{them:ConvResults} (3) demonstrates that, when running \ours{} under strategy~II for a sufficiently large predetermined number of iterations, the expected function value converges to the optimal value \MinRev{up to a term proportional to the variance of the gradient estimator}. In our experiments, we simply choose the last iterate as the output.

\section{Numerical Experiments}
\label{sec:NumericalExperiments}
ODT is a noninvasive and label-free technique that allows one to obtain a refractive-index~(RI) map of the sample~\cite{wolf1969three}. 
In ODT, the sample is sequentially illuminated from different angles. The outgoing complex wave field of each illumination is recorded through a digital-holography microscope \cite{kim2010principles}.
Finally, the RI map is recovered by solving an inverse-scattering problem.
\Cref{fig:ODT} displays a scheme of the acquisition principle. ODT is ideal for studying the performance of \ours{}.
Indeed, inverse-scattering problems are composite minimization problems that can be either convex or nonconvex, depending on the choice of the physical model--whether a linear model, such as the \MajRev{Born equation~\cite{born1920volumen}}, or a nonlinear model, such as the \MajRev{Lippmann-Schwinger (LippS) equation~\cite{schwinger1951green}}. The nonlinear model is accurate for strongly scattering samples. For completeness, we provide a brief introduction to the continuous model of ODT in the supplementary material. Following~\cite{pham2020three}, we discretize the continuous ODT model using finite differences. When LippS is used as the forward model, executing $\mathcal H_l(\uvx)$ once and computing the gradient of \MajRev{$f_l$ at point $\uvx$} require solving the associated LippS equation once and twice, respectively. \MajRev{Here, $\mathcal H_l$ models the physical mapping from $\uvx$ to the measurements~$\uvy_l$.}  In the following experiments, we use the BiCGSTAB algorithm \cite{van1992bi} to solve the discretized LippS equations. See~\cite{pham2020three} and the references therein for further details about ODT. 
\tikzset{
pics/cone/.style args={#1}{
  code={
    \draw [dotted,fill=#1!12,thick,join=round,opacity=0.5](0,0) -- (2,-.7) -- (2,.7) --cycle;
    \draw [fill=#1!12,dashed,thin,opacity=0.5](2,0) ellipse (.4 and .7);
    \draw [dashed,thin,opacity=0.5](1.2,0) ellipse (.24 and .42);
    \draw [dashed,thick] (-2.8,0) -- (2,0);
    \draw [red,opacity=0.7,-latex,thin] (2,0.7) -- (1.2,0.42);
    \draw [red,opacity=0.7,-latex,thin] (2,-0.7) -- (1.2,-0.42);
    \draw [red,-latex,thin] (1.6,0) -- (1,0);
    \draw [red,opacity=0.5,-latex,thin] (2.35,-0.25) -- (1.4,-0.15);
    \draw [red,opacity=0.5,-latex,thin] (2.35,0.25) -- (1.4,0.15);
    \draw [red,-latex,thin] (1.65,-0.35) -- (1,-0.21);
    \draw [red,-latex,thin] (1.65,0.35) -- (1,0.21);
  }
}
}

\begin{figure}[t]
    \centering
\begin{tikzpicture}
\node[yslant=-0.6944] at (4.0569,1.0870)
{\includegraphics[width=2cm]{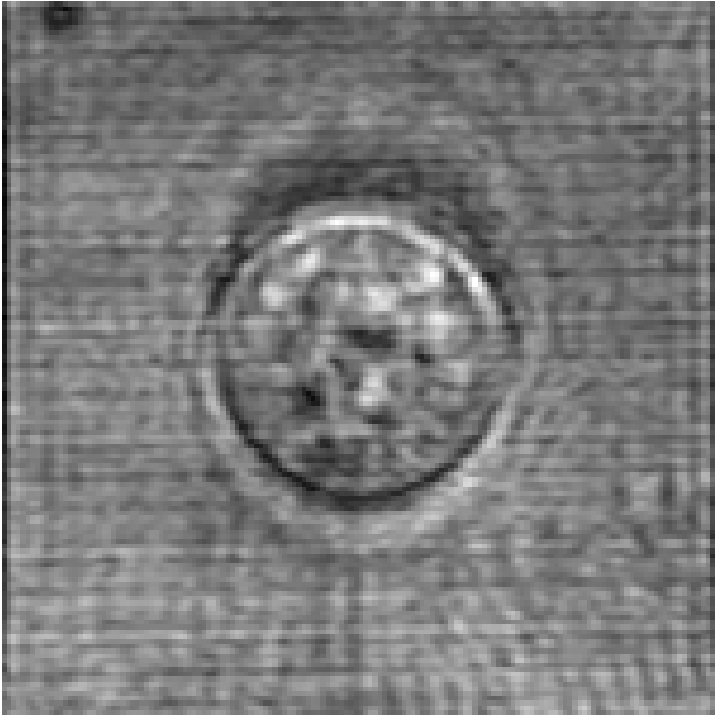}}; 
\node at (3,2.8) {$\Gamma$};
\draw [fill=blue!10,opacity=0.5,join=round](0.9,0.1) -- (-0.35,-0.3) -- (1,-1.2)  -- (2.25,-0.8)  --cycle;
\draw [fill=blue!10,opacity=0.5,join=round](0.9,0.1) -- (2.25,-0.8) -- (2.25,0.7) -- (0.9,1.6) --cycle;
\draw [join=round](0.9,0.1) -- (2.25,-0.8) -- (2.25,0.7) -- (0.9,1.6) --cycle;
\draw [fill=blue!10,opacity=0.5,join=round](0.9,0.1) -- (0.9,1.6) -- (-0.35,1.2) -- (-0.35,-0.3) --cycle;
\draw [join=round](0.9,0.1) -- (0.9,1.6) -- (-0.35,1.2) -- (-0.35,-0.3) --cycle;
\path (0,0) pic [rotate=-165,scale=1.5]{cone=gray};
\node at (1,0.3)
{\includegraphics[rotate=90,scale=0.03]{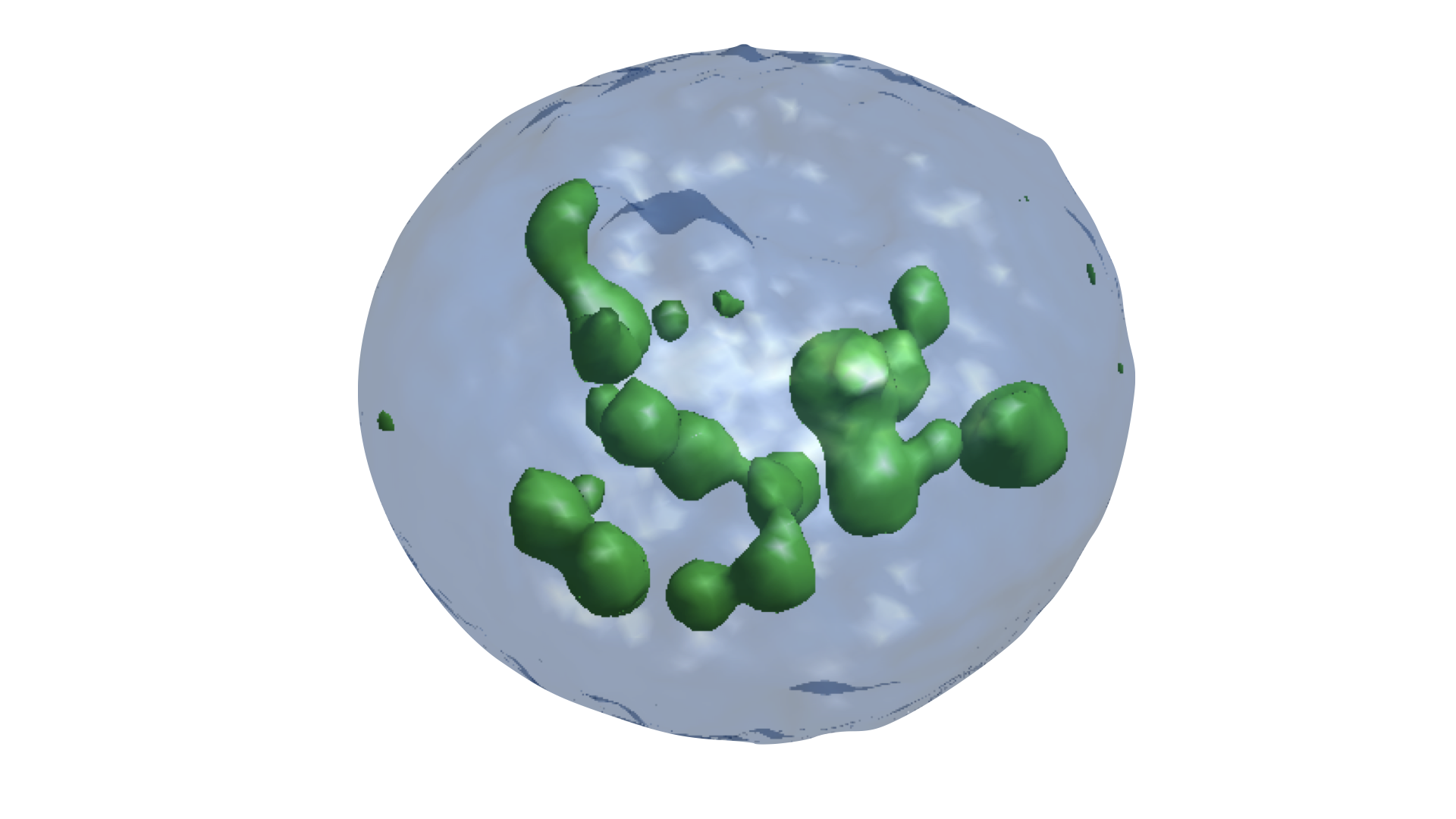}};
\draw [join=round](-0.35,-0.3) -- (1,-1.2) -- (1,0.37) -- (-0.35,1.2) --cycle;
\draw [join=round] (1,-1.2) -- (1,0.37) --  (2.25,0.7)  --(2.25,-0.8) --cycle;
\node at (-0.6,1.3) {$\Omega$};
\node at (1.55,-0.75) {$\eta(\uvr)$};
\node[red] at (-2.4,0.5) {$\uvk_l^\mathrm{in}$};
\end{tikzpicture} 
    \caption{Principle of optical diffraction tomography. The arrows represent the wave vectors  $\{\mathbf{k}^\mathrm{in}_l\in \mathbb R^3\}_{l=1}^L$ of the $L$ incident plane waves $\{u_\mathrm{in}^l\}_{l=1}^L$.
    The angles of illumination are limited to a cone around the optical axis. The refractive-index map of the sample~$\eta(\uvr)$ is embedded in the domain $\Omega\subset\mathbb{R}^3$, and the recorded domain is denoted by $\Gamma$. \MajRev{The parameter $\vr$ denotes the three-dimensional spatial coordinate.}}
    \label{fig:ODT}
\end{figure}

We studied the performance of \ours{} for reconstructing the RI map using simulated and real data, incorporating an isotropic TV regularizer and a nonnegativity constraint. Specifically,
we solved
\begin{equation}
\uvx^\ast\in\arg\min_{\uvx\in\mathbb R^N_+} ~~\Phi(\uvx).\label{eq:ODTMinProb}
\end{equation}
For both simulated and real data, we compared \ours{} with \oursF{}, ASPM, and the variance-reduced based stochastic quasi-Newton proximal method (\wangs{})~\cite{wang2019stochastic}. For completeness, we present the details of ASPM in the supplementary material. We use \ours{}-I and \ours{}-II to denote \ours{} with strategy I and strategy II, respectively. Note that Wang \emph{et al.} \cite{wang2019stochastic} only considered $h=\|\cdot\|_1$, but, for the sake of fairness, we considered a constrained TV regularizer instead. We then deployed our method in \Cref{sec:IncrementalWPM:sub:eff:WPM} to efficiently solve the related WPM. In consequence, we only compared the Hessian estimation approach of \cite{wang2019stochastic} with ours.

For the simulated data, we first recovered RI maps using the first-order Born approximation~\cite{chen1998validity}.
The corresponding physical model is then linear, which makes~\eqref{eq:ODTMinProb} a convex optimization problem. \MajRev{However, we will see that the linear Born model is deficient for strongly scattering samples, illustrating the importance of using more accurate models.} We then recovered RI maps using LippS on simulated and real data, which means that~\eqref{eq:ODTMinProb} now corresponds to a nonconvex optimization problem. In our experiments, we demonstrated the advantages of using LippS for strongly scattering samples. \MajRev{For clarity of presentation, the experiments based on the linear Born model were included in the supplementary material.}

All experiments were run on a workstation with 3.3GHz AMD EPYC 7402 and NVIDIA GeForce RTX 3090. For a fair comparison, all reconstruction algorithms were run on the same GPU platform.
Our implementation is based on the GlobalBioIm library~\cite{soubies2018pocket} and is publicly available at \url{https://github.com/hongtao-argmin/MiniBatch-QNP-NonlinearReco}. 

\begin{figure}[!ht]
	\centering
	\newcommand{\colorSNR}{\color{white}}
\subfigure[]{\includegraphics[width=0.495\textwidth]{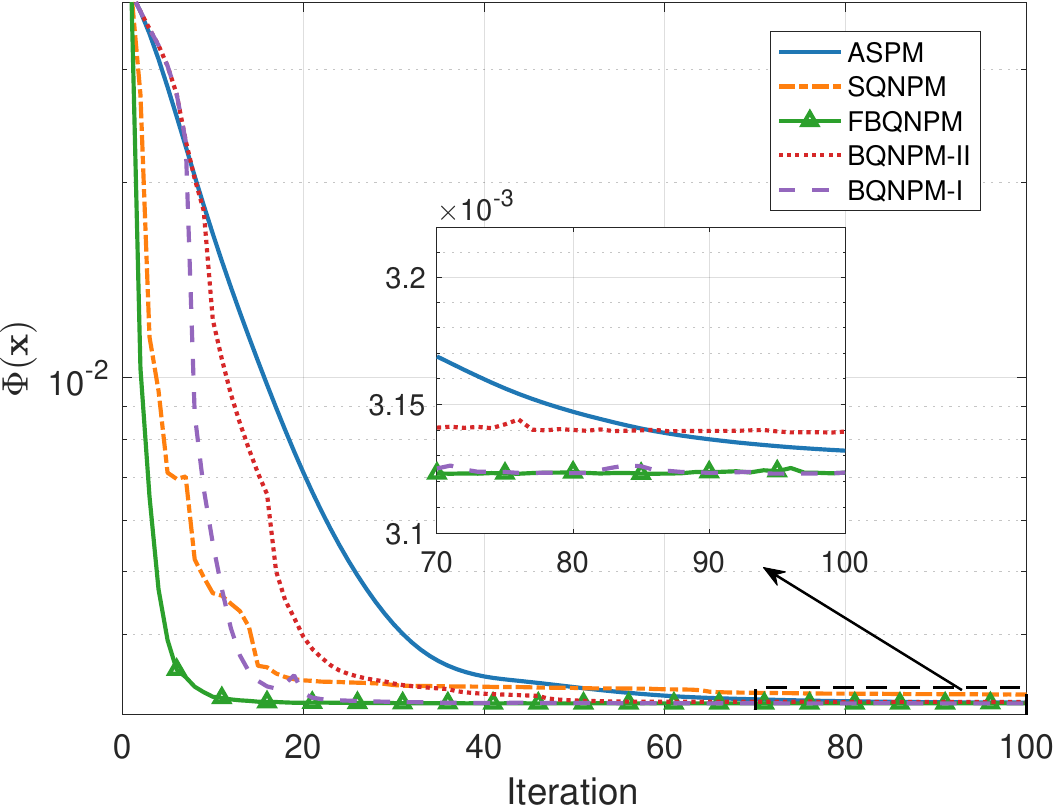}}
\subfigure[]{\includegraphics[width=0.48\textwidth]{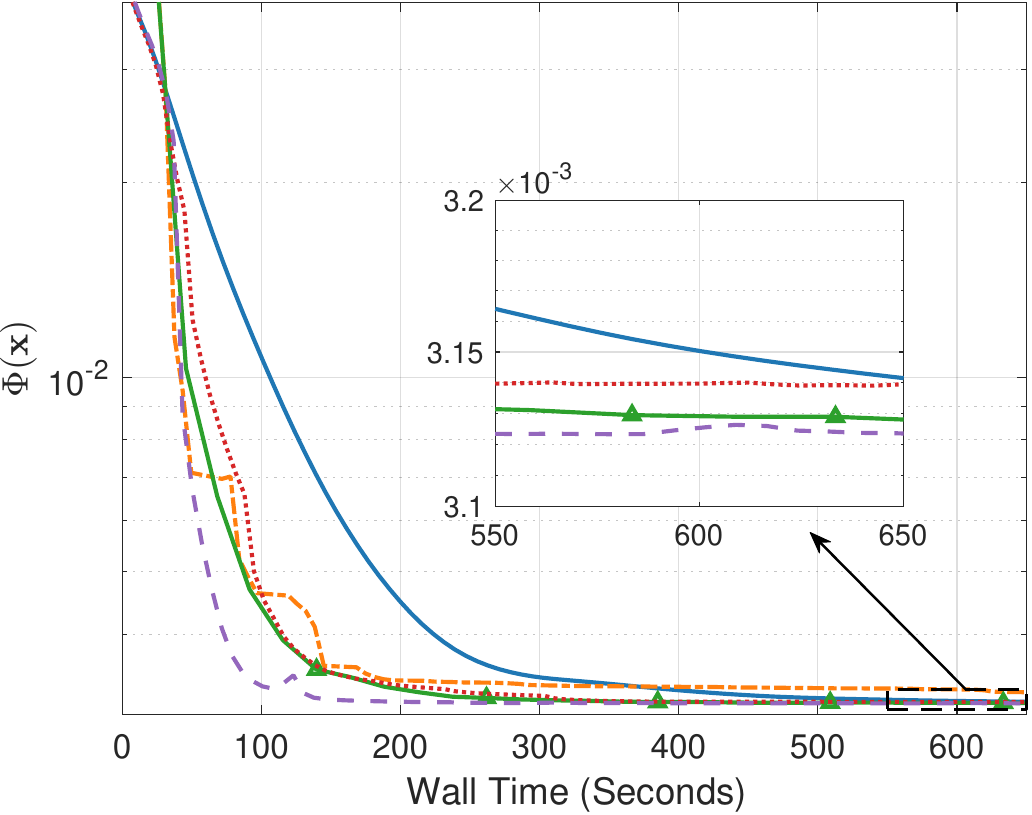}}

\subfigure[]{\includegraphics[width=0.495\textwidth]{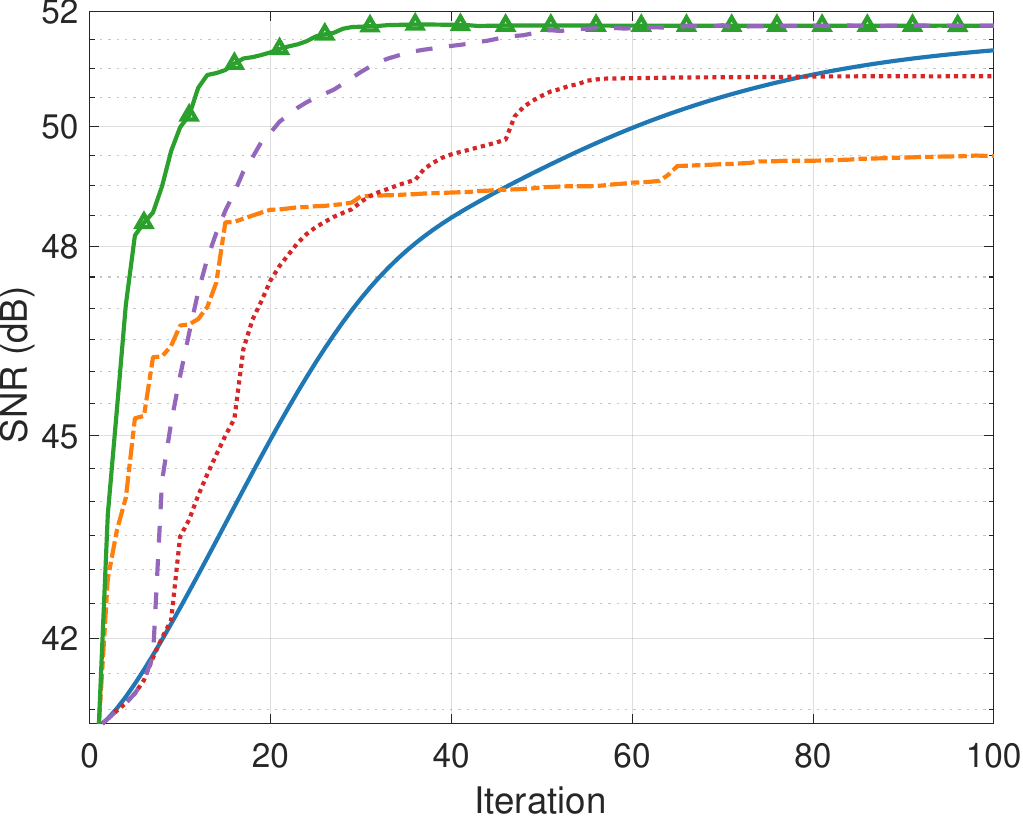}}
\subfigure[]{\includegraphics[width=0.48\textwidth]{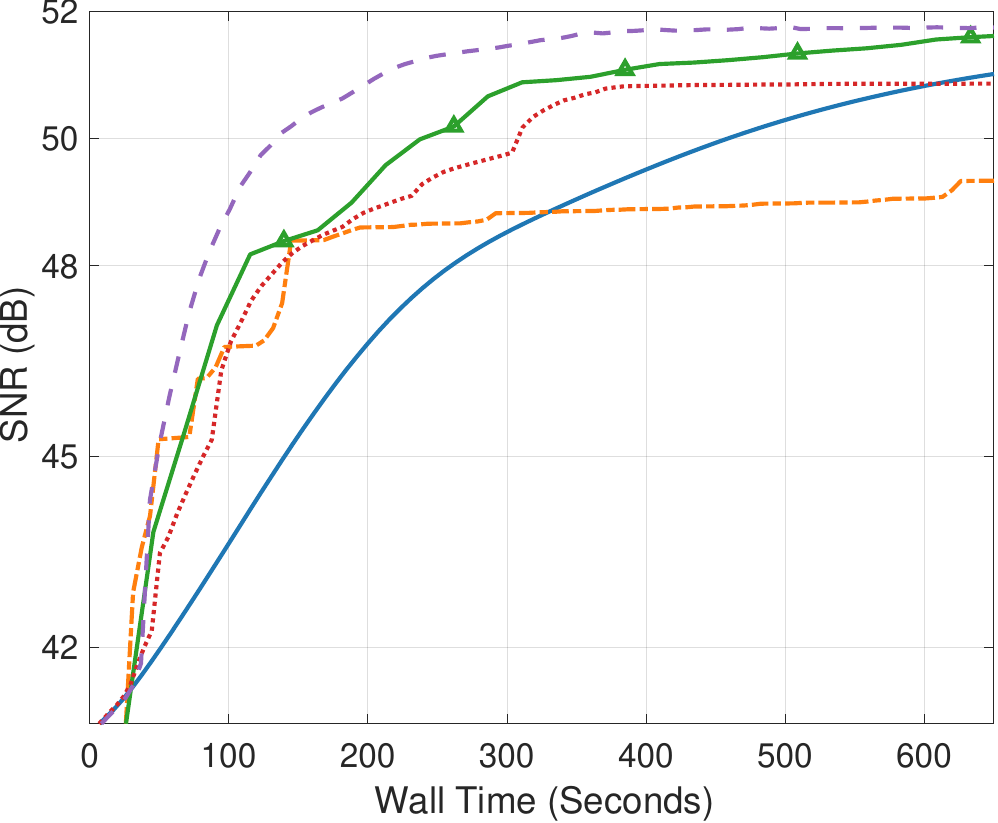}}
\caption{
Performance of ASPM, \wangs{} \cite{wang2019stochastic}, \ours{}-I/II, and \oursF{} algorithms on the strongly scattering simulated sample using the LippS model.
From top to bottom rows: Full cost and SNR values versus iteration and wall time.}
	\label{fig:LS:CostSNR:Simulated}
\end{figure}

\begin{figure}[!ht]
	\centering
	\newcommand{\colorSNR}{\color{white}}
	\hspace{1.3cm}
\begin{tikzpicture}
    \pgfplotsset{every axis y label/.append style={anchor=base,yshift = -1.5mm} }
     \begin{axis}[at={(0,0)},anchor = north west, ylabel = XZ,
    xmin = 0,xmax = 216,ymin = 0,ymax = 55, width=0.6\textwidth,
        scale only axis,
        enlargelimits=false,
        axis line style={draw=none},
        tick style={draw=none},
        axis equal image,
        xticklabels={,,},yticklabels={,,},
        ]
    \node[inner sep=0pt, anchor = south west] (GT_xz_1) at (0,0) {\includegraphics[ width=0.15\textwidth]{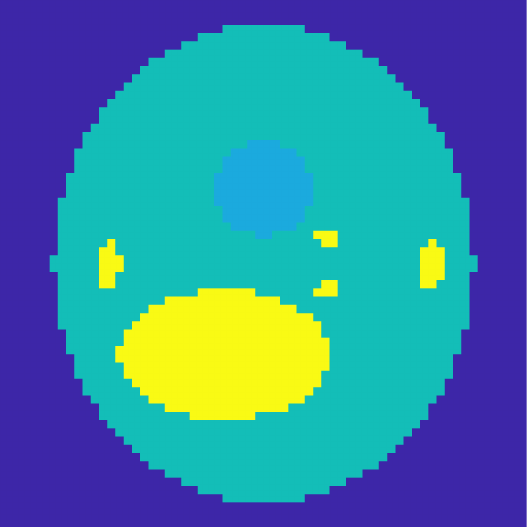}};
    \node at (6,4) {\scriptsize{\color{white} GT}};
    

  \node[inner sep=0pt, anchor = west] (FISTA_xz_1) at (GT_xz_1.east) {\includegraphics[ width=0.15\textwidth]{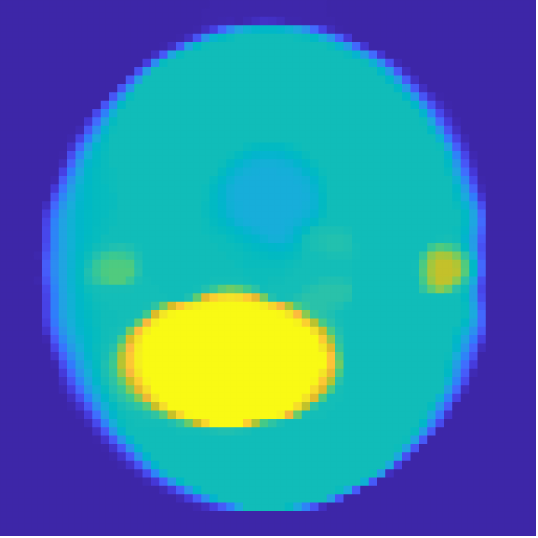}};
  \node[anchor=north west,xshift=-1mm, yshift=1mm] at (FISTA_xz_1.north west) {\colorSNR{}\footnotesize$46.55$dB};
  \node at (65,4) {\scriptsize{\color{white} ASPM}};

   \node[inner sep=0pt, anchor =  west] (QN_15_xz_1) at (FISTA_xz_1.east) {\includegraphics[width=0.15\textwidth]{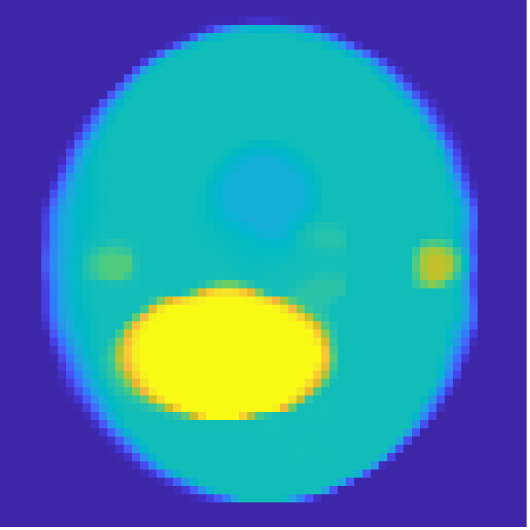}};
  \node[anchor=north west,xshift=-1mm, yshift=1mm] at (QN_15_xz_1.north west) {\colorSNR{}\footnotesize$46.63$dB};
  \node at (125,3) {\scriptsize{\color{white} \ours{}-I}};

  \node[inner sep=0pt, anchor = west] (QN_35_xz_1) at (QN_15_xz_1.east) {\includegraphics[width=0.15\textwidth]{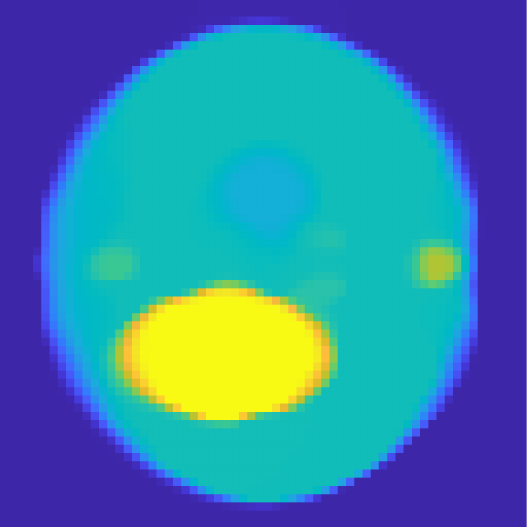}};
   \node[anchor=north west,xshift=-1mm, yshift=1mm] at (QN_35_xz_1.north west) {\colorSNR{}\small$45.98$dB};
  \node at (180,3) {\scriptsize{\color{white} \ours{}-II}};

%

 \end{axis}
    
\begin{axis}[at={(GT_xz_1.south west)},anchor = north west,ylabel style={align=center},ylabel = {XY},yshift=0.15cm,
    xmin = 0,xmax = 216,ymin = 0,ymax = 58, width=0.6\textwidth,
        scale only axis,
        enlargelimits=false,
       axis line style={draw=none},
       tick style={draw=none},
        axis equal image,
        xticklabels={,,},yticklabels={,,}
       ]
              
    \node[inner sep=0pt, anchor = south west] (GT_xy_1) at (0,0) {\includegraphics[ width=0.15\textwidth]{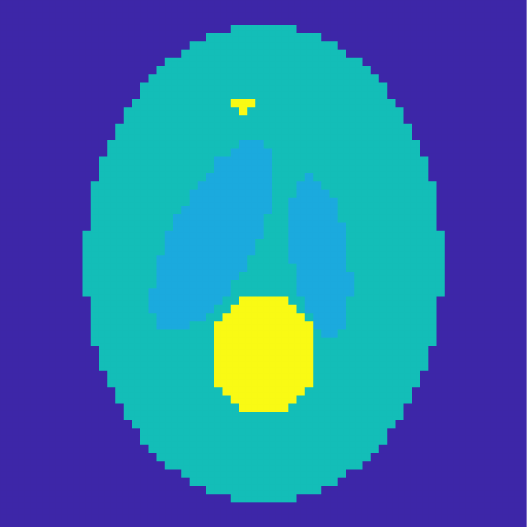}};

  \node[inner sep=0pt, anchor = west] (FISTA_xy_1) at (GT_xy_1.east) {\includegraphics[ width=0.15\textwidth]{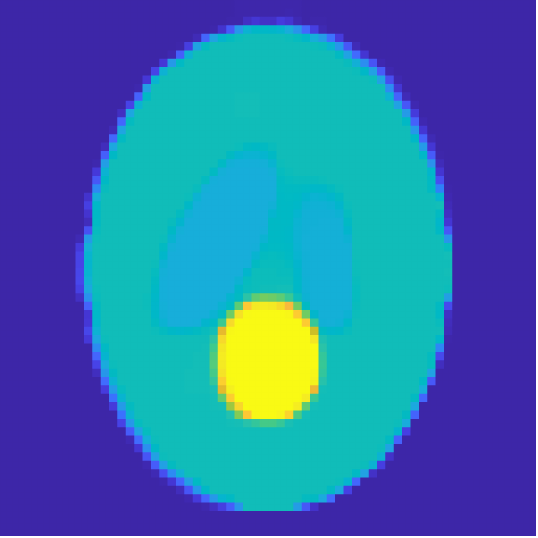}};
  \node[anchor=north west,xshift=-1mm, yshift=1mm] at (FISTA_xy_1.north west) {\colorSNR{}\footnotesize$51.49$dB};

   \node[inner sep=0pt, anchor =  west] (QN_15_xy_1) at (FISTA_xy_1.east) {\includegraphics[ width=0.15\textwidth]{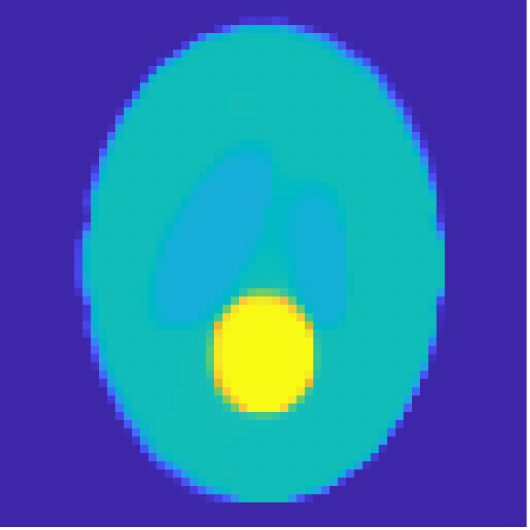}};
  \node[anchor=north west,xshift=-1mm, yshift=1mm] at (QN_15_xy_1.north west) {\colorSNR{}\footnotesize$51.51$dB};

  \node[inner sep=0pt, anchor = west] (QN_35_xy_1) at (QN_15_xy_1.east) {\includegraphics[ width=0.15\textwidth]{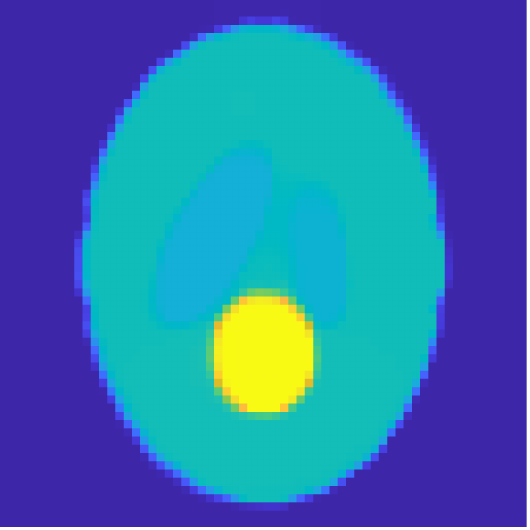}};
   \node[anchor=north west,xshift=-1mm, yshift=1mm] at (QN_35_xy_1.north west) {\colorSNR{}\small$51.09$dB};

%
%
   
 \end{axis}
 
\begin{axis}[at={(GT_xy_1.south west)},anchor = north west,ylabel style={align=center},ylabel = {YZ},yshift=0.15cm,
    xmin = 0,xmax = 216,ymin = 0,ymax = 58, width=0.6\textwidth,
        scale only axis,
        enlargelimits=false,
       axis line style={draw=none},
       tick style={draw=none},
        axis equal image,
        xticklabels={,,},yticklabels={,,}
        colormap name=parula,
       colormap={parula}{
            rgb=(0.208100000000000,0.166300000000000,0.529200000000000)
            rgb=(0.211623809523810,0.189780952380952,0.577676190476191)
            rgb=(0.212252380952381,0.213771428571429,0.626971428571429)
            rgb=(0.208100000000000,0.238600000000000,0.677085714285714)
            rgb=(0.195904761904762,0.264457142857143,0.727900000000000)
            rgb=(0.170728571428571,0.291938095238095,0.779247619047619)
            rgb=(0.125271428571429,0.324242857142857,0.830271428571429)
            rgb=(0.0591333333333334,0.359833333333333,0.868333333333333)
            rgb=(0.0116952380952381,0.387509523809524,0.881957142857143)
            rgb=(0.00595714285714286,0.408614285714286,0.882842857142857)
            rgb=(0.0165142857142857,0.426600000000000,0.878633333333333)
            rgb=(0.0328523809523810,0.443042857142857,0.871957142857143)
            rgb=(0.0498142857142857,0.458571428571429,0.864057142857143)
            rgb=(0.0629333333333333,0.473690476190476,0.855438095238095)
            rgb=(0.0722666666666667,0.488666666666667,0.846700000000000)
            rgb=(0.0779428571428572,0.503985714285714,0.838371428571429)
            rgb=(0.0793476190476190,0.520023809523810,0.831180952380952)
            rgb=(0.0749428571428571,0.537542857142857,0.826271428571429)
            rgb=(0.0640571428571428,0.556985714285714,0.823957142857143)
            rgb=(0.0487714285714286,0.577223809523810,0.822828571428572)
            rgb=(0.0343428571428572,0.596580952380952,0.819852380952381)
            rgb=(0.0265000000000000,0.613700000000000,0.813500000000000)
            rgb=(0.0238904761904762,0.628661904761905,0.803761904761905)
            rgb=(0.0230904761904762,0.641785714285714,0.791266666666667)
            rgb=(0.0227714285714286,0.653485714285714,0.776757142857143)
            rgb=(0.0266619047619048,0.664195238095238,0.760719047619048)
            rgb=(0.0383714285714286,0.674271428571429,0.743552380952381)
            rgb=(0.0589714285714286,0.683757142857143,0.725385714285714)
            rgb=(0.0843000000000000,0.692833333333333,0.706166666666667)
            rgb=(0.113295238095238,0.701500000000000,0.685857142857143)
            rgb=(0.145271428571429,0.709757142857143,0.664628571428572)
            rgb=(0.180133333333333,0.717657142857143,0.642433333333333)
            rgb=(0.217828571428571,0.725042857142857,0.619261904761905)
            rgb=(0.258642857142857,0.731714285714286,0.595428571428571)
            rgb=(0.302171428571429,0.737604761904762,0.571185714285714)
            rgb=(0.348166666666667,0.742433333333333,0.547266666666667)
            rgb=(0.395257142857143,0.745900000000000,0.524442857142857)
            rgb=(0.442009523809524,0.748080952380952,0.503314285714286)
            rgb=(0.487123809523809,0.749061904761905,0.483976190476191)
            rgb=(0.530028571428571,0.749114285714286,0.466114285714286)
            rgb=(0.570857142857143,0.748519047619048,0.449390476190476)
            rgb=(0.609852380952381,0.747314285714286,0.433685714285714)
            rgb=(0.647300000000000,0.745600000000000,0.418800000000000)
            rgb=(0.683419047619048,0.743476190476191,0.404433333333333)
            rgb=(0.718409523809524,0.741133333333333,0.390476190476190)
            rgb=(0.752485714285714,0.738400000000000,0.376814285714286)
            rgb=(0.785842857142857,0.735566666666667,0.363271428571429)
            rgb=(0.818504761904762,0.732733333333333,0.349790476190476)
            rgb=(0.850657142857143,0.729900000000000,0.336028571428571)
            rgb=(0.882433333333333,0.727433333333333,0.321700000000000)
            rgb=(0.913933333333333,0.725785714285714,0.306276190476191)
            rgb=(0.944957142857143,0.726114285714286,0.288642857142857)
            rgb=(0.973895238095238,0.731395238095238,0.266647619047619)
            rgb=(0.993771428571429,0.745457142857143,0.240347619047619)
            rgb=(0.999042857142857,0.765314285714286,0.216414285714286)
            rgb=(0.995533333333333,0.786057142857143,0.196652380952381)
            rgb=(0.988000000000000,0.806600000000000,0.179366666666667)
            rgb=(0.978857142857143,0.827142857142857,0.163314285714286)
            rgb=(0.969700000000000,0.848138095238095,0.147452380952381)
            rgb=(0.962585714285714,0.870514285714286,0.130900000000000)
            rgb=(0.958871428571429,0.894900000000000,0.113242857142857)
            rgb=(0.959823809523810,0.921833333333333,0.0948380952380953)
            rgb=(0.966100000000000,0.951442857142857,0.0755333333333333)
            rgb=(0.976300000000000,0.983100000000000,0.0538000000000000)
        }, colorbar,
    colorbar style={
    	width=0.3cm,          
        height=5cm,                         
        xshift=0.1cm,  
      yshift = 3.5cm,
        ytick={1.333,1.43},
        yticklabels = {1.333,1.43},
        yticklabel style={font=\small},
    },   point meta min=1.333,
          point meta max=1.43]
          
    \node[inner sep=0pt, anchor = south west] (GT_yz_1) at (0,0) {\includegraphics[ width=0.15\textwidth]{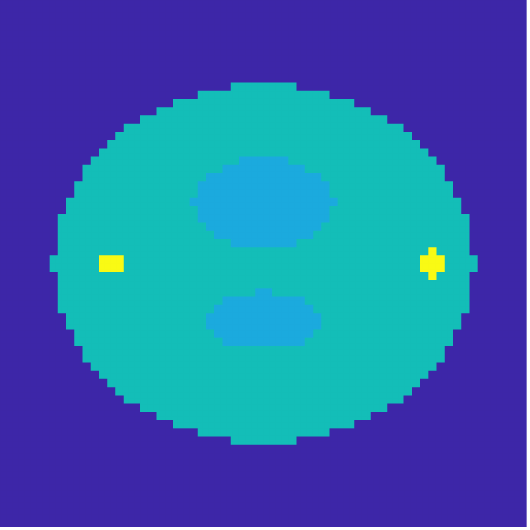}};

   \node[inner sep=0pt, anchor = west] (FISTA_yz_1) at (GT_yz_1.east) {\includegraphics[ width=0.15\textwidth]{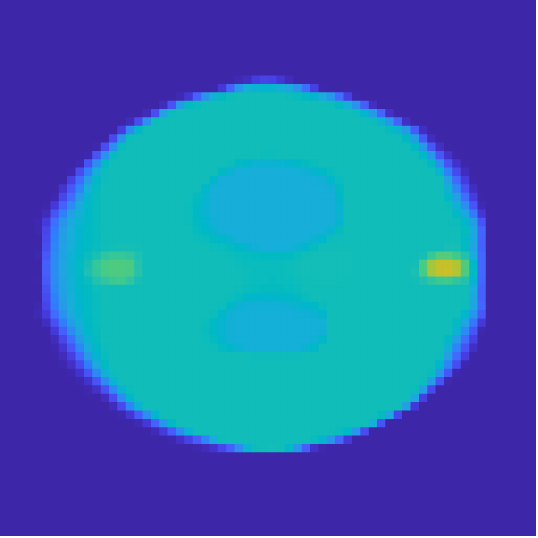}};
  \node[anchor=north west,xshift=-1mm, yshift=1mm] at (FISTA_yz_1.north west) {\colorSNR{}\footnotesize$49.44$dB};

   \node[inner sep=0pt, anchor =  west] (QN_15_yz_1) at (FISTA_yz_1.east) {\includegraphics[ width=0.15\textwidth]{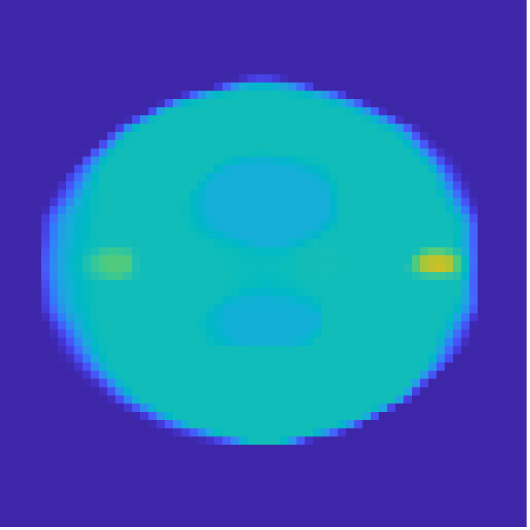}};
  \node[anchor=north west,xshift=-1mm, yshift=1mm] at (QN_15_yz_1.north west) {\colorSNR{}\footnotesize$49.50$dB};

  \node[inner sep=0pt, anchor = west] (QN_35_yz_1) at (QN_15_yz_1.east) {\includegraphics[ width=0.15\textwidth]{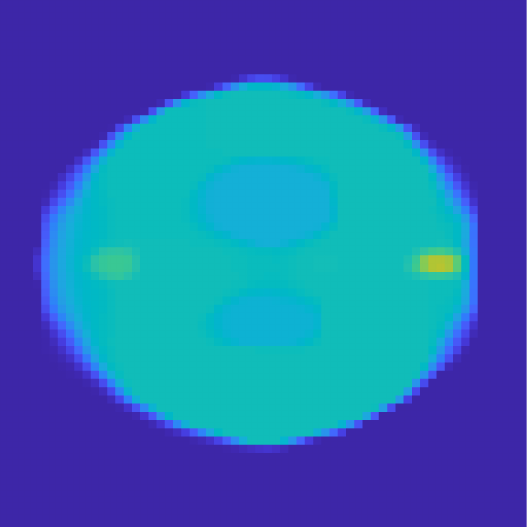}};
   \node[anchor=north west,xshift=-1mm, yshift=1mm] at (QN_35_yz_1.north west){\colorSNR{}\small$48.79$dB};

%
%
   
   \end{axis}
   
   \end{tikzpicture} 
\caption{
Orthoviews of the 3D refractive-index maps obtained by ASPM~(iter. $k=100$) and \ours{}-I/II~(iter. $k=38/100$) algorithms on the strongly scattering simulated sample using the Lippmann-Schwinger model.
The SNR for each slice is displayed in the top-left corner of each image.
 \wangs{} yielded the worst SNR, which we did not present here.}
	\label{fig:LS:Ortho:Simulated}
\end{figure}
\subsection{Simulated Data}
\noindent{\emph {Simulation Settings:}}
We mainly used two phantoms as the ground-truth volumes: one weakly scattering sample (maximal RI $1.363$) and one strongly scattering sample (maximal RI $1.43$). The strongly scattering sample with RI~$\eta_\mathrm{strong}(\vr)$ (\MajRev{where $\vr$ denotes the three-dimensional spatial coordinate}) was immersed in a medium with RI $\eta_\mathrm{m}=1.333$ and was illuminated by plane waves of wavelength $\lambda_\mathrm{in}=406$nm.
The domain~$\Omega$ is a cube of edge length $3.2\mu$m and fully contains the sample.
To obtain the complex-valued measurements, we used the Lipps on a grid with a resolution of $50$nm, yielding a total of $64^3$ voxels to discretize $\Omega$ and $\eta_\mathrm{strong}$.
The sample was probed by $L=60$ tilted plane waves~$u_\mathrm{in}^l(\uvr)=\mathrm{exp}(\mathrm{j}\langle\uvk_l^{\mrm{in}},\uvr\rangle)$ for $l=1,\ldots,L$.
The wavevectors~$\{\uvk_l^\mathrm{in}\in\mathbb{R}^3\}_{l=1}^L$ were embedded in a cone with half-angle $42^{\circ}$~(see \Cref{fig:ODT}). We then obtained a total of $60\times512^2$ measurements.
Without the regularization, this setting makes our inverse problem ill-posed. The measurements are lacking information on the frequency along the optical axis, \textit{i.e.}, the so-called missing cone problem.
For the weakly scattering sample, we proceeded similarly ({same medium and wavelength}) but simulate with the linear Born model to generate the measurements instead. 

\subsubsection{Nonlinear LippS Model--Strongly Scattering Sample}
\label{Sec:Exp:subsec:2Dnonlinear}
In this part, we studied the performance of \ours{}-I/II to recover the RI maps using the LippS model. The regularization $\lambda$ and the stepsize were set as~$1/64^3$ and $1/20$, respectively.
A total of $100$ iterations were performed for all competing methods.

\Cref{fig:LS:CostSNR:Simulated} shows the evolution of the full cost and SNR versus the iterations and wall time for all competing methods. Although \wangs{} converged faster than ASPM and \ours{}-I/II in terms of iterations at the beginning, it became slower than \ours{}-I/II at the later iterations. Moreover, \ours{}-I/II required fewer iterations than ASPM to reach a lower full cost. \oursF{} is the fastest algorithm in terms of iterations, but it loses its advantage in running time due to the need to compute the full gradient at each iteration. From the perspective of running time, \ours{}-I is the fastest algorithm, demonstrating the superiority of our method. Moreover, we also observed that \ours{}-I converged faster than \ours{}-II, \MajRev{which is consistent with the observation made in the linear Born model results (cf. the supplementary material)}. Compared to the reconstruction under the Born model (cf. the experiments in the supplementary material), \Cref{fig:LS:CostSNR:Simulated} shows that the LippS model required almost three times more wall time than the linear Born model to perform the same number of iterations for reconstruction. This observation highlights the importance of reducing the number of gradient computations at each iteration in the LippS model, illustrating the merits of \ours{}.

The second row of \Cref{fig:LS:CostSNR:Simulated} shows ASPM achieved \MajRev{its highest SNR} at the $100$th iteration ($776.6$ seconds) while \ours{}-I only required $38$ iterations ($264.3$ seconds) to achieve a similar SNR, demonstrating the superiority of our approach and the benefits of utilizing second-order information. \Cref{fig:LS:Ortho:Simulated} displays the orthoviews of the RI maps recovered by ASPM, and \ours{}-I/II. We saw that all these results for a nonconvex composite-optimization problem corroborated the observations we got on the convex counterpart.

 \begin{figure}[t]
	\centering
\subfigure[\ours{}-I]{\includegraphics[width=0.495\textwidth]{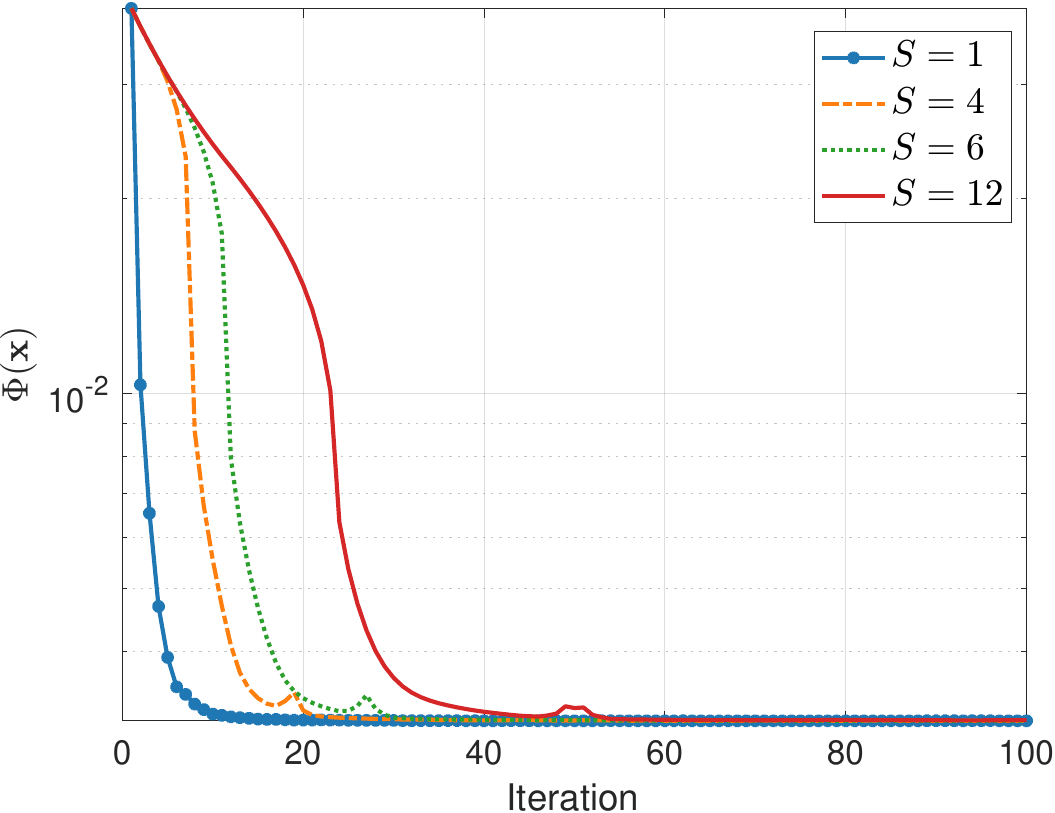}\label{fig:LSSimLSReco:Subset:Iter:I}}
\subfigure[\ours{}-I]{\includegraphics[width=0.48\textwidth]{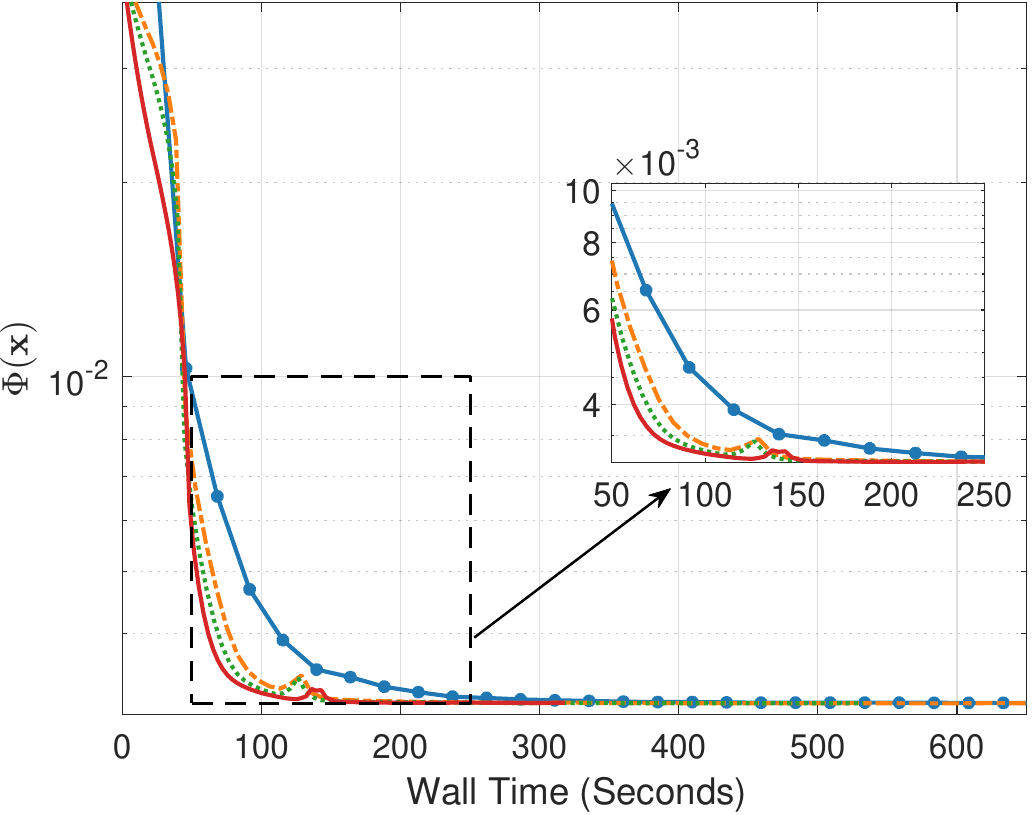}\label{fig:LSSimLSReco:Subset:Time:I}}

\subfigure[\ours{}-II]{\includegraphics[width=0.5\textwidth]{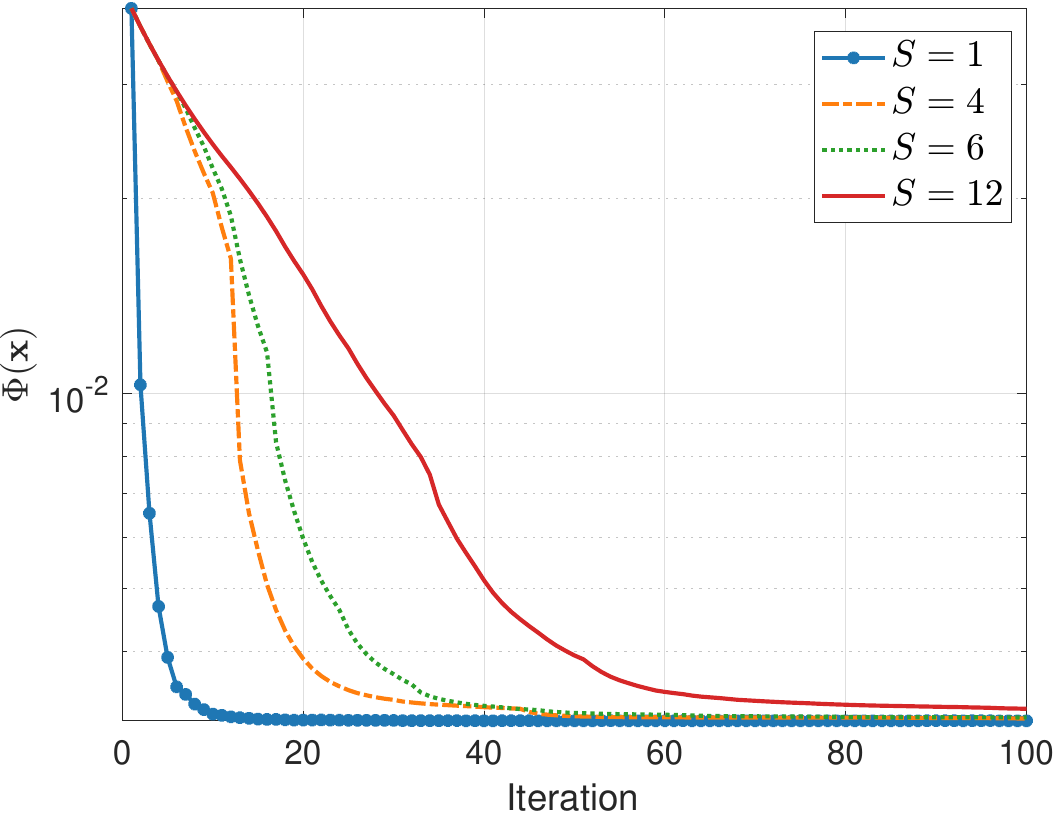}\label{fig:LSSimLSReco:Subset:Iter:II}}
\subfigure[\ours{}-II]{\includegraphics[width=0.48\textwidth]{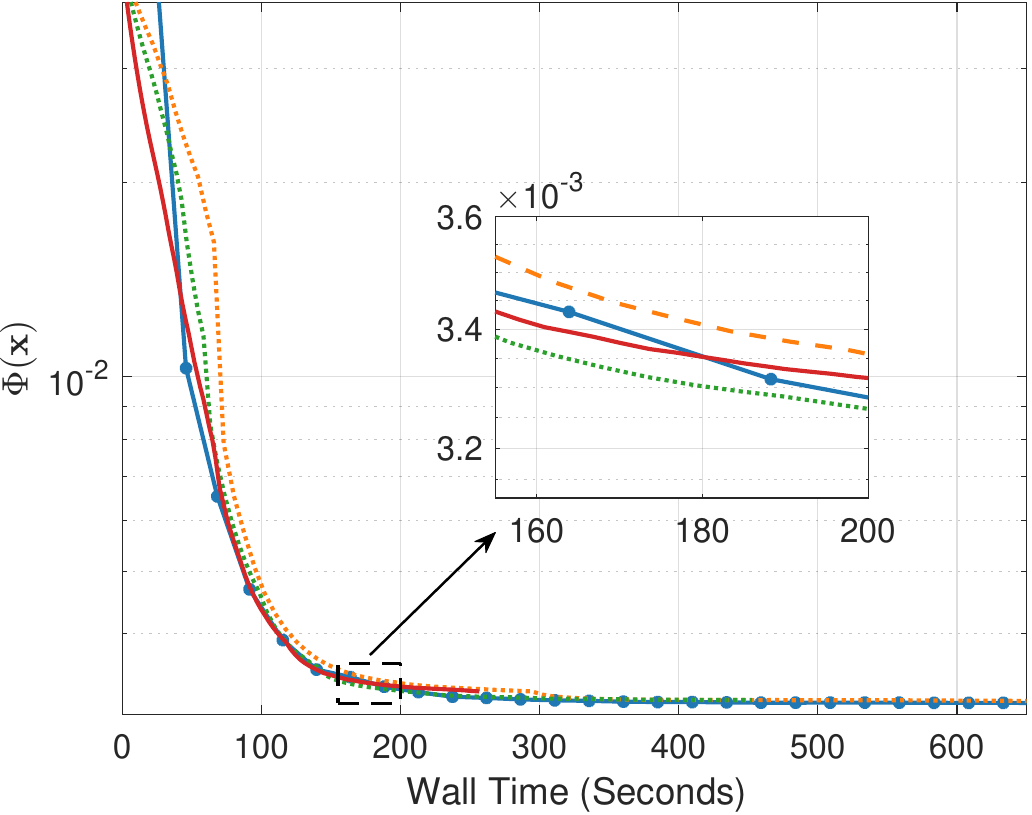}\label{fig:LSSimLSReco:Subset:Time:II}}

\caption{
Effect of $\Ksubset$ on the convergence behavior of \ours{}-I/II with $\gamma=0.8$.}
 \label{fig:LSSimLSReco:Subset}
\end{figure}

\begin{figure}[t]
	\centering

\subfigure[\ours{}-I]{\includegraphics[width=0.45\textwidth]{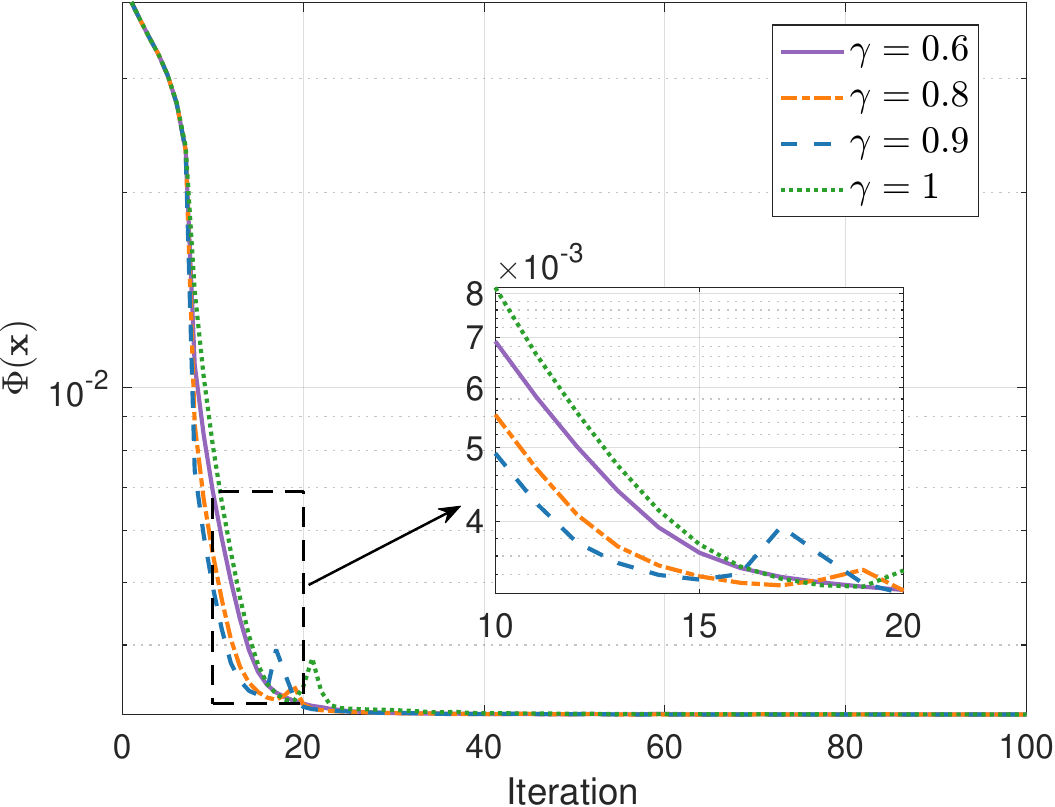}
\label{fig:LSSimLSReco:Gamma:I}}
\subfigure[\ours{}-II]{\includegraphics[width=0.45\textwidth]{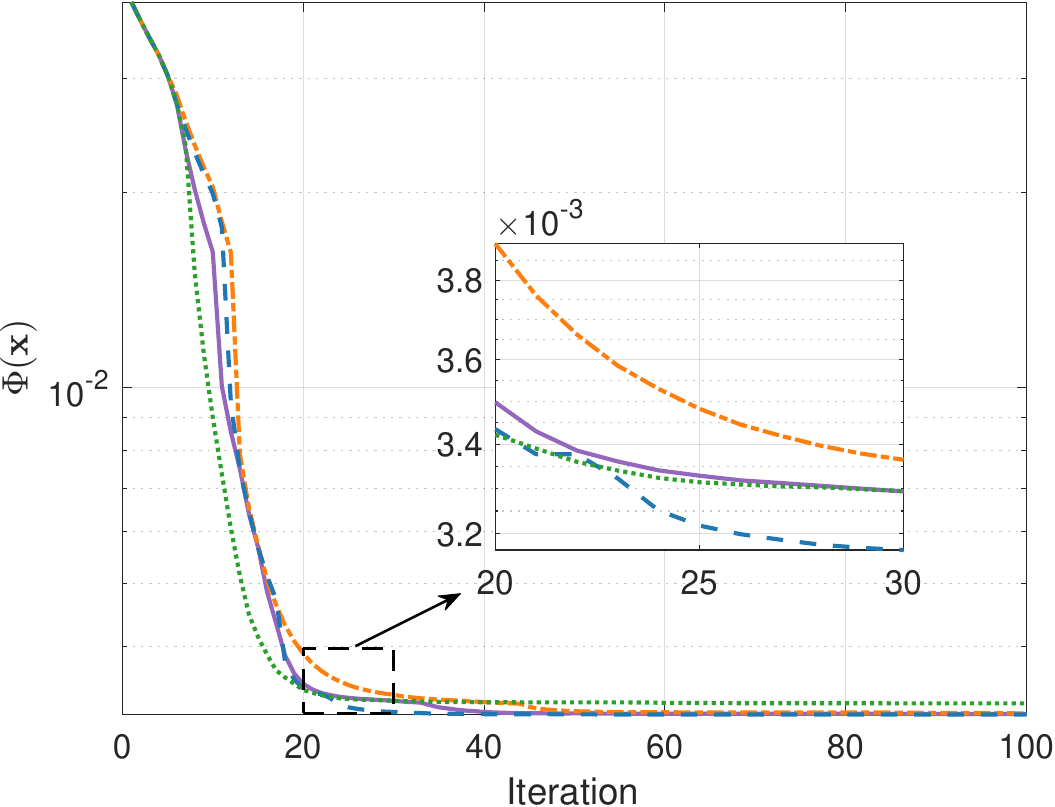}
\label{fig:LSSimLSReco:Gamma:II}}
\caption{
Effect of $\gamma$ on the convergence behavior of \ours{}-I/II with $S=4$.
}
	\label{fig:LSSimLSReco:Gamma}
\end{figure}

\subsubsection{On the Choice of \texorpdfstring{$\Ksubset$}{TEXT} and \texorpdfstring{$\gamma$}{TEXT}}
In this part, we investigated the effect of $\Ksubset$ and \MinRev{$\gamma$ (which is used to scale the initial Hessian matrix; cf. \Cref{alg:WeightingSR1})} on the convergence behavior of \ours{}. \Cref{fig:LSSimLSReco:Subset} presents the full cost versus iterations and wall time for \ours{}-I/II across different values of $\Ksubset$. Clearly, we saw the convergence of \ours{}-I/II were influenced by $\Ksubset$. In particular, a smaller $\Ksubset$ led to faster convergence in terms of iterations because it resulted in a more accurate gradient and Hessian estimation. Indeed, $\Ksubset=1$ is the fastest one in terms of iterations since it used the \emph{full} gradient at each iteration. However, \ours{} with $\Ksubset=1$ required more computation at each iteration and thus lost its efficiency in terms of wall time.  Indeed, \Cref{fig:LSSimLSReco:Subset:Time:I,fig:LSSimLSReco:Subset:Time:II} show that \ours{} with $\Ksubset=1$ converged slower than $\Ksubset>1$ in terms of wall time. Moreover, \Cref{fig:LSSimLSReco:Subset:Time:I} indicates \ours{}-I with $\Ksubset=12$ converged faster than the other methods in terms of wall time, while \Cref{fig:LSSimLSReco:Subset:Time:II} shows \ours{}-II with $\Ksubset=6$ was the fastest one in terms of wall time. However, the difference in wall time was not significant for $S\geq 4$, therefore we simply set $S = 4$ in our experiments.  \Cref{fig:LSSimLSReco:Gamma} describes the effect of $\gamma$ on the convergence behavior of \ours{}-I/II. \MajRev{For \ours{}-I, we observed that using $\gamma<1$ led to faster convergence during the early iterations. Eventually, different $\gamma$ reached similar final costs, indicating that \ours{}-I is not particularly sensitive to the choice of $\gamma\leq 1$.} For \ours{}-II, we observed that $\gamma = 1$ converged faster than the others at the early iterations, but it eventually yielded a slightly higher final full cost. \MajRev{However, we observed that \ours{}-II is not sensitive to $\gamma < 1$ because they converged to similar final costs}. Since values of $\gamma < 1$ consistently produced slightly better results, we chose $\gamma = 0.8$ in our experiments.

\input{figs/LSReal.tex}

\subsubsection{Convergence Validation}  
\label{sec:NumericalExp:Simulated:ConvVal}
\begin{figure}[t]
\centering
\subfigure[\ours{}-I]{\includegraphics[width=0.45\textwidth]{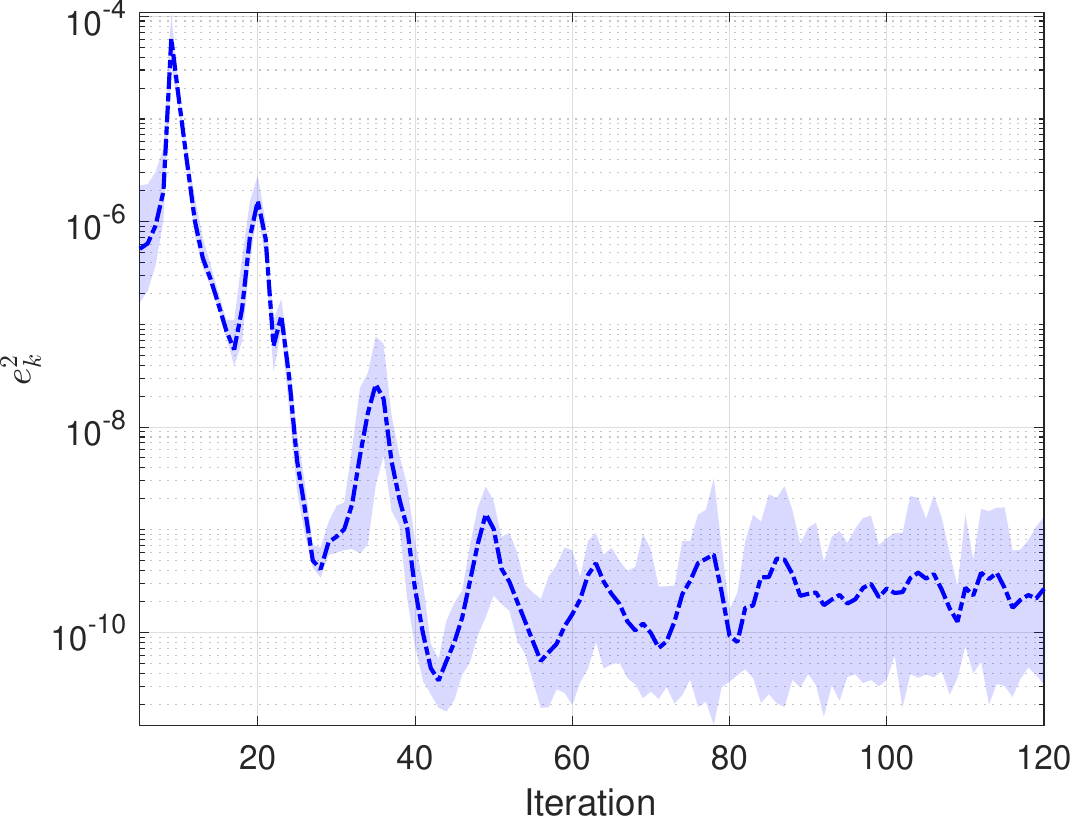}
\label{fig:ConvVal:biaserr:I}}
\subfigure[\ours{}-I]{\includegraphics[width=0.45\textwidth]{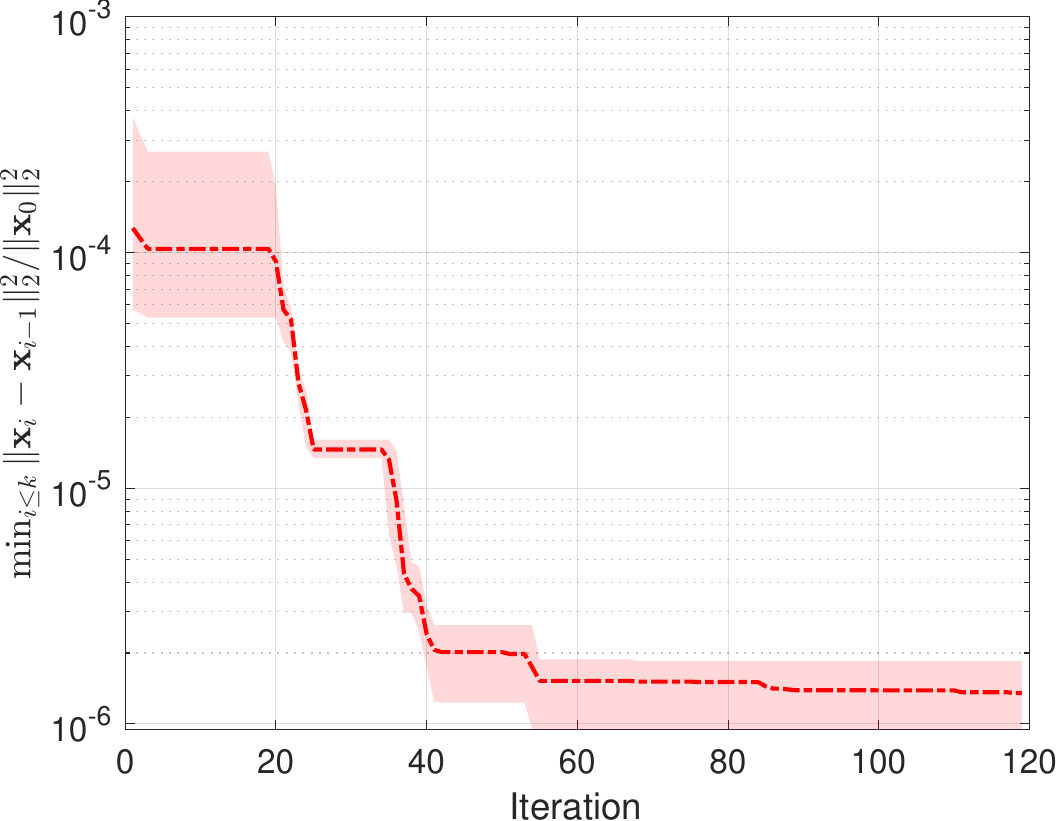}
\label{fig:ConvVal:iterdiff:I}}

\subfigure[\ours{}-II]{\includegraphics[width=0.45\textwidth]{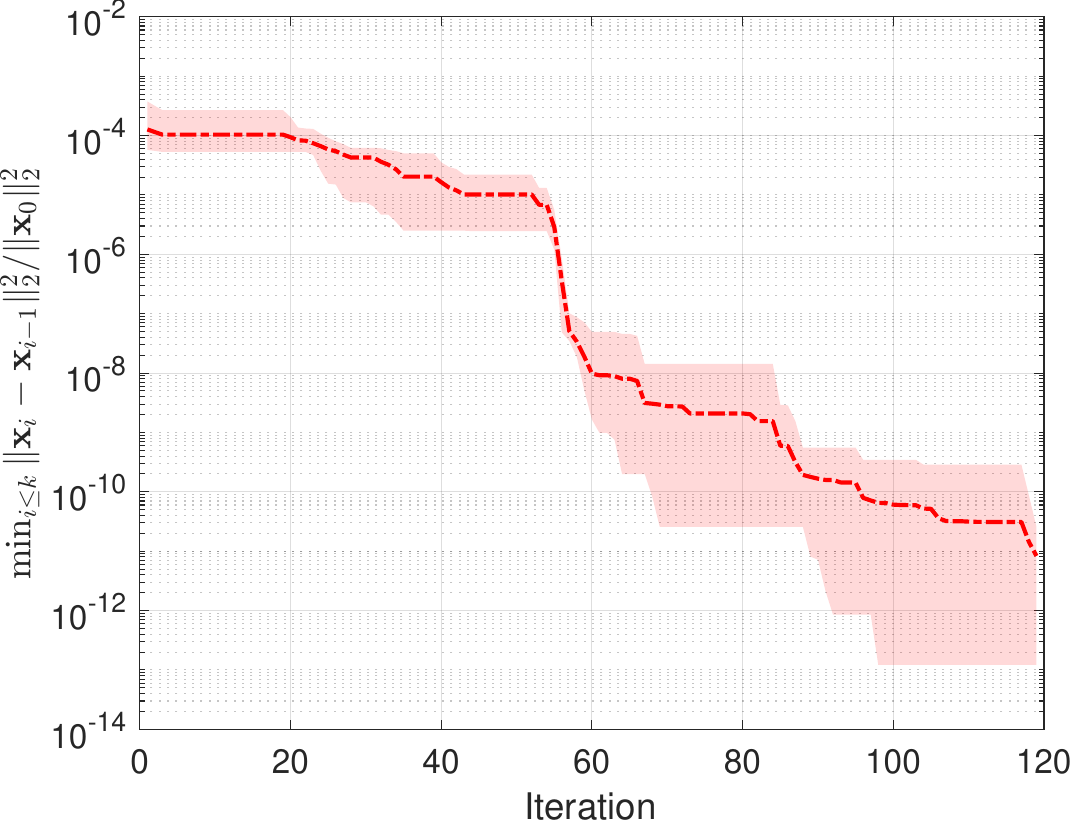}
\label{fig:ConvVal:iterdiff:II}}
\subfigure[\ours{}-I/II]{\includegraphics[width=0.45\textwidth]{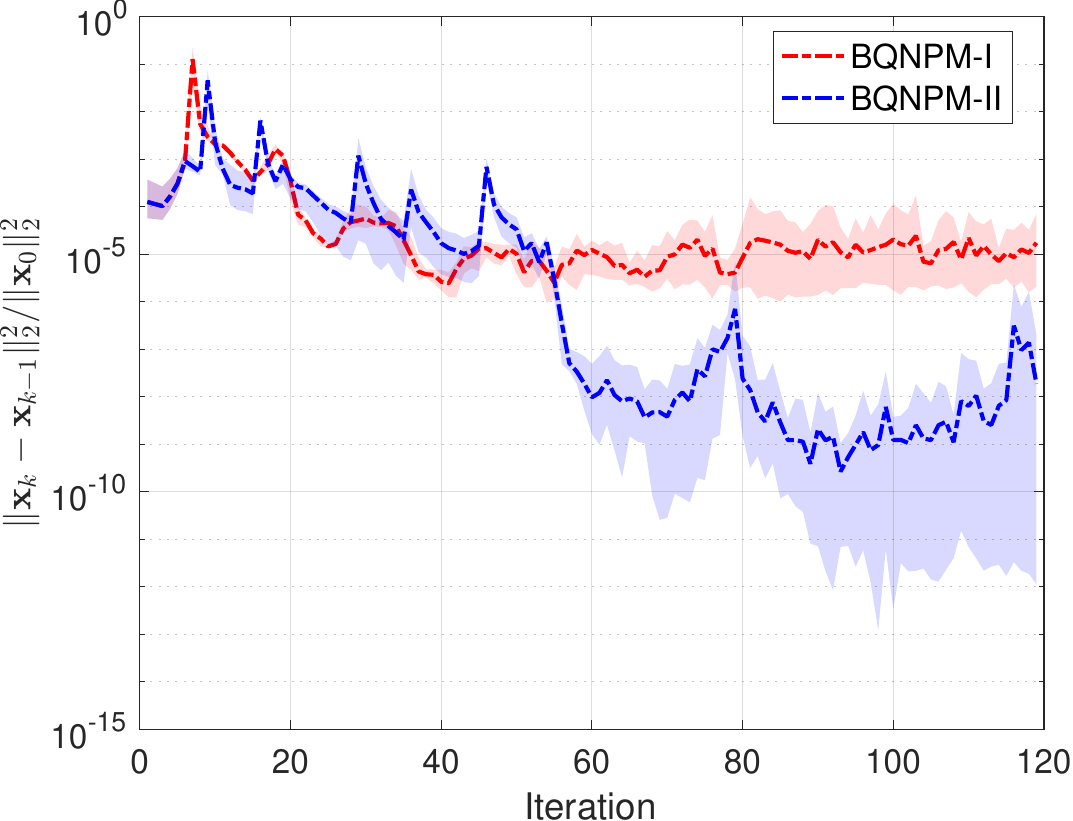}\label{fig:ConvVal:delta:IandII}}
\caption{(a): Averaged $e^2_k$ values versus iterations for \ours{}-I on the reconstruction of strongly scattering samples; \MajRev{(b), (c), and (d): Averaged $\min_{i\leq k}\|\uvx_{i}-\uvx_{i-1}\|_2^2/\|\uvx_0\|_2^2$ and $\|\uvx_k-\uvx_{k-1}\|_2^2/\|\uvx_0\|_2^2$ values versus iterations for \ours{}-I/II on the reconstruction of strongly scattering samples}. The shaded region of each curve represents the range of
the evaluation criterion across samples with different refractive-index values.}
\label{fig:ConvVal:IandII}	
\end{figure}
In this section, we empirically validate the theoretical results presented in \Cref{sec:ConvAnalysis}. The algorithmic setting used here was identical to \Cref{fig:LS:CostSNR:Simulated}. We reconstructed various samples with different maximal RI values ranging from $1.41$ to $1.53$ in intervals of $0.01$. \Cref{fig:ConvVal:biaserr:I} shows the average squared error $e_k^2 = \left\| \frac{1}{S} \sum_{s \neq s'} \left( \nabla F_s(\uvx_{k-1}) - \uvg_s^k \right) \right\|_2^2$ versus iteration for \ours{}-I. This quantity eventually tends to zero, indicating that $e^*$ in the first part of \Cref{them:ConvResults} is negligible in practice. Indeed, \Cref{fig:ConvVal:iterdiff:I} depicts the averaged $\min_{i\leq k}\|\uvx_i-\uvx_{i-1}\|_2^2/\|\uvx_0\|_2^2$ values versus iteration for \ours{}-I, showing a reduction by an order of two as the iterations progressed. This demonstrated that the values were well-controlled, thereby validating our results in \Cref{them:ConvResults}. \Cref{fig:ConvVal:iterdiff:II} presents the result of \ours{}-II, which clearly shows that $\min_{i\leq k}\|\uvx_{i}-\uvx_{i-1}\|_2^2/\|\uvx_0\|_2^2$ tended to zero as the iterations progressed. \MajRev{\Cref{fig:ConvVal:delta:IandII} shows the values of $\|\uvx_k-\uvx_{k-1}\|_2^2/\|\uvx_0\|_2^2$, which indicate that the difference between consecutive iterates becomes smaller as the iterations progress.}

\subsection{Real Data}
Finally, we assessed the performance of \ours{}-I/II on real data of a yeast cell immersed in water~($\eta_m=1.338$). The sample was illuminated by $60$ incident plane waves ($\lambda=532$nm) embedded in a cone of illumination whose half-angle is $35$\degree~\cite{ayoub2019method,lim2019high}.
The discretized volume has a total of $96^3$ voxels of size $99^3\mathrm{nm}^3$. See \cite{ayoub2019method, lim2019high,pham2020three} for the detailed description of the acquisition settings.
The stepsize and regularization parameter were set as $0.01$ and $2/96^3$, respectively.
Moreover, $60\times 150^2$ measurements were used for the reconstruction and~$60$ iterations were run for ASPM, \wangs{}, \ours{}-I/II, \MajRev{and \oursF}.
\Cref{fig:LS:CostSNR:real} displays the evolution of the full cost for both algorithms. Similar to the simulated cases, we saw that \MajRev{\oursF{} was the fastest algorithm in terms of iterations. In this case, we also observed that \oursF{} was faster than ASPM, \wangs{}, and \ours{}-II in terms of wall time. Overall, \ours{}-I was the fastest algorithm in terms of wall time.} ASPM achieved a lower cost than \ours{}-II at later iterations, while \ours{}-I remained the fastest. \Cref{fig:LS:recon:real} presents the orthoviews of the RI maps obtained at the $10$th, $30$th, $40$th, and $50$th iteration for each method. We saw that \ours{}-I recovered a qualitatively good RI map in $30$ iterations~($659.05$ seconds), while ASPM achieved a similar quality only after $50$ iterations~($1344$ seconds). The reconstructed images of the same yeast cell presented in~\cite{lim2019high} were visually similar to those obtained by our method.

\section{Conclusion}
\label{sec:conclusionAndfuture}
We propose a \increm{} quasi-Newton proximal method (\ours{}) for solving constrained total variation-based nonlinear image reconstruction problems. The computational cost of \ours{} is independent of the number of measurements, making it well-suited for composite minimization problems involving large sets of measurements. Additionally, our method avoids the need to compute the full gradient of the data-fidelity term, thereby eliminating the costly traversal of the entire measurements. This represents a significant departure from existing stochastic proximal quasi-Newton methods.  

We have also introduced an efficient approach to compute the weighted proximal mapping required by \ours{}. Furthermore, we have provided a convergence analysis of \ours{} in the nonconvex setting. Our numerical experiments on 3D optical diffraction tomography, conducted with both simulated and real data, demonstrate that \ours{} converges more rapidly than a stochastic accelerated first-order proximal method, both in terms of iterations and wall time. These results highlight how the proposed method can substantially reduce the computational cost of solving composite inverse problems.

\MajRev{Although \ours{} is mainly illustrated for ODT reconstruction with a TV regularizer, it can be naturally extended to other regularizers such as wavelets~\cite{chambolle1998nonlinear} and the Hessian–Schatten norm~\cite{lefkimmiatis2013hessian}. Moreover, the weighted proximal mappings for wavelets and for the Hessian–Schatten norm can be computed in a manner similar to that of the TV regularizer. Note that our convergence analysis is developed in a nonconvex setting, making it particularly interesting to extend the framework to inverse problems with massive measurements by incorporating learned priors~\cite{cohen2021has} with convergence guarantees. Furthermore, our experiments on the linear Born model also demonstrate the fast convergence of \ours{} in the convex setting, suggesting that it is also natural to extend \ours{} to positron emission tomography (PET), computed tomography (CT), and multi-coil magnetic resonance imaging (MRI) applications.
}

%




\appendix

\section{Derivation from \texorpdfstring{\eqref{eq:SecondProximal_TV}}{TEXT} to \texorpdfstring{\eqref{eq:SecondProximal_WPMTV}}{TEXT}}
\label{app:deduce:aylorAppSubFuncToSecondProximal_TV}
\MajRev{
By substituting \eqref{eq:TaylorAppSubFunc} into \eqref{eq:SecondProximal_TV} for all $s$ and ignoring the constant terms, we obtain 
$$
\begin{array}{rrl}
	\uvx_k=&\arg\min_{\uvx\in\mathcal C}&\left\{ \frac{1}{S}\sum_s \MinRev{\left(\frac{1}{2a_k}\uvx^\Trans\umB_s^k\uvx +\langle \uvg_s^k-\frac{1}{a_k}\umB_s^k\uvx_s^k,\uvx\rangle\right)}\right\} + \bar h(\uvx)\\[6pt]
	&& =\frac{1}{a_kS} \left\{\frac{1}{2}\uvx^\Trans \umB^k\uvx -\langle \sum_s (\umB_s^k\uvx_s^k-a_k\uvg_s^k),\uvx\rangle \right\} + \bar h(\uvx)\\[6pt]
	&&=\frac{1}{2a_kS}\|\uvx-\uvv_k\|_{\umB^k}^2 + \bar h(\uvx) - \frac{1}{2a_kS}\|\uvv_k\|_{\umB^k}^2.
\end{array}
$$
By ignoring the constant term $\frac{1}{2a_kS}\|\uvv_k\|_{\umB^k}^2$ and multiplying the remaining cost function by $a_kS$, we reach \eqref{eq:SecondProximal_WPMTV}.
}

\section{Proof of \texorpdfstring{\Cref{lemma:gradient:LipschitzConstant:dualPro}}{TEXT}}
\label{app:proof:gradient:LipschitzConstant:dualPro}
Denote by $\uvh_{\mathcal C}(\uvx)=\uvx-\mrm{prox}^{\sumBt}_{\iota_{\mathcal C}}\left(\uvx\right)$ and $T(\umP)= \left(-\left\|\uvh_{\mathcal C}\left(\uvw_k(\umP)\right)\right\|_{\sumBt}^2+\left\|\uvw_k(\umP)\right\|^2_{\sumBt}\right)$. \MinRev{The proof consists of differentiating the function $T$, which can be rewritten as
$$
\umP \mapsto T(\umP)
= \|\uvw_k(\umP)\|_{\umB^k}^2
 - \|\uvw_k(\umP) - \mrm{prox}_{\iota_{\mathcal C}}^{\umB^k} ( \uvw_k(\umP))\|_{\umB^k}^2
= \|\uvw_k(\umP)\|_{\umB^k}^2
 - 2\,\mathcal{M}_{\iota_{\mathcal C}}^{\umB^k}(\uvw_k(\umP)).
$$
Let $\umJ {\uvw_k}(\umP) = -a_k S \lambda (\umB^k)^{-1} \uvd$ denote the Jacobian matrix of $\uvw_k$ at point $\umP$. Then the chain rule thus naturally leads to
$$
\begin{array}{rcl}
   {\pmb{\nabla}} T(\umP) &=& [\umJ\uvw_k(\umP)]^\Trans \Big( 2\umB^k \uvw_k(\umP)-2\pmb{\nabla} \mathcal M_{\iota_{\mathcal C}}^{\umB^k} (\uvw_k(\umP))  \Big) \\
   &=&-a_k\Ksubset\lambda\mathcal \uvd^\Trans {(\umB^k)}^{-1} \Big(2\umB^k \uvw_k(\umP)- 2\umB^k\big(\uvw_k(\umP)-\mrm{prox}_{\iota_{\mathcal C}}^{\umB^k} ( \uvw_k(\umP))\big) \Big)	\\
   &=&-2a_k\Ksubset\lambda\mathcal \uvd^\Trans \Big(\mrm{prox}_{\iota_{\mathcal C}}^{\umB^k} ( \uvw_k(\umP))\Big)
\end{array}
$$}

Now, we compute the Lipschitz constant of $\nabla T$. 
For every two pairs of $\umP_1$ and $\umP_2$, we have that
\begin{equation}
\begin{array}{rcl}
	\|{\pmb{\nabla}} T(\umP_1)-{\pmb{\nabla}} T(\umP_2)\|&=&\left\|2a_k\Ksubset\lambda\mathcal \uvd^\Trans\left(\mrm{prox}^{\sumBt}_{\iota_{\mathcal C}}\left(\uvw_k(\umP_1)\right)
	-\mrm{prox}^{\sumBt}_{\iota_{\mathcal C}}\left(\uvw_k(\umP_2)\right)
\right)\right\|\\[6pt]
&\leq& 2a_k\Ksubset\lambda\sqrt{(\omega_{\mrm{min}}^k)^{-1}}\|\uvd^\Trans\|\\[6pt]
&&~~\left\|\mrm{prox}^{\sumBt}_{\iota_{\mathcal C}}\left(\uvw_k(\umP_1)\right)
	-\mrm{prox}^{\sumBt}_{\iota_{\mathcal C}}\left(\uvw_k(\umP_2)\right)
\right\|_{\sumBt}\\[6pt]
&\overset{(*)}{\leq}& 2a_k\Ksubset\lambda\sqrt{(\omega_{\mrm{min}}^k)^{-1}}\|\uvd^\Trans\|\cdot\|Sa_k\lambda\left(\sumBt\right)^{-1}\left(\uvd(\umP_1)-\uvd(\umP_2)\right)\|_{\sumBt}\\[6pt]
&\overset{(**)}{\leq}&  2a_k^2S^2\lambda^2(\omega_{\mrm{min}}^k)^{-1}\|\mathcal \uvd^{\Trans}\|\cdot \|\uvd\|\cdot\|\umP_1-\umP_2\|\\[6pt]
&=& 2a_k^2S^2\lambda^2(\omega_{\mrm{min}}^k)^{-1}\|\uvd\|^2\cdot\|\umP_1-\umP_2\|,
\end{array}
\end{equation}
where the transition $(*)$ follows from the non-expansiveness property of the WPM \cite{lee2014proximal}, while $(**)$ follows from the fact that $\|\left(\sumBt\right)^{-1}\uvx\|_{\sumBt} \leq \sqrt{(\omega_{\mrm{min}}^k)^{-1}}\|\uvx\|$, with $\omega_{\mrm{min}}^k$ the smallest eigenvalue of $\sumBt$. Following the proof in \cite[Lemma 4.2]{beck2009fastTV}, we have $\|\uvd\|=\sqrt{4D}$, where $D$ is the dimension of the image $\uvx$. The Lipschitz constant of $T(\umP)$ is then \MajRev{$8Da_k^2S^2
\lambda^2(\omega_{\mrm{min}}^k)^{-1}$}. We note that $\omega_{\mrm{min}}^k$ can be obtained through the power method since $\left(\sumBt\right)^{-1}\uvx$ can be applied cheaply in our case. Alternatively, one could adopt a backtracking strategy to set the stepsize at each iteration \cite{beck2017first}.


\section{Proof of \texorpdfstring{\Cref{lemma:OptWPMIneq}}{TEXT}}
\label{App:sec:Proof:lemma:OptWPMIneq}
Notice that $\uvx_k$ is the optimal solution in  \eqref{eq:SecondProximal_TV} \MinRev{and the cost function in  \eqref{eq:SecondProximal_TV} is convex. There exists $\uvg_{\bar h}\in\partial {\bar h}(\uvx_k)$  such that, for any $\uvx'\in\mathcal C$, we have
$$
\left\langle \frac{1}{\Ksubset}\sum_s \vnabla \bar F_s^k(\uvx_k)+\uvg_{\bar h},\uvx'-\uvx_k\right \rangle \geq 0,
$$
where $\partial \bar h (\uvx_k)$ refers to the subgradient of $\bar h$ at point $\uvx_k$. Letting $\uvx'=\uvx_{k-1}$ and using the fact that $\bar F_s^k(\uvx_k)=F_s(\uvx_s^k)+\left\langle \uvg_s^k, \uvx_k-\uvx_s^k\right\rangle +\frac{1}{2\stepsize}\|\uvx_k-\uvx_s^k\|^2_{\umB_s^k}$ for all $s$ (cf.~\eqref{eq:TaylorAppSubFunc}), we obtain 
$$
\begin{array}{rcl}
	\left\langle \frac{1}{S}\sum_s \uvg_s^k,\iterDiffk\right\rangle &\leq& \Big \langle \left[\frac{1}{\stepsize\,S}\sum_s\umB_s^k(\uvx_k-\uvx_s^k)\right] +\uvg_{\bar h}, -\iterDiffk \Big \rangle\\[6pt]
	&\leq & \Big \langle \left[\frac{1}{\stepsize\,S}\sum_s\umB_s^k(\uvx_k-\uvx_s^k)\right], -\iterDiffk \Big \rangle + \bar h(\uvx_{k-1}) - \bar h(\uvx_k),
\end{array}
$$
where $\iterDiffk=\uvx_k-\uvx_{k-1}$. The second inequality follows from the fact that $\bar h$ is convex and $\uvg_{\bar h}\in\partial \bar{h}(\uvx_k)$.} Multiplying both sides by $\stepsize\,S$, we get the desired result.

\section{Proof of \texorpdfstring{\Cref{lemma:boundedHessSR1}}{TEXT}}
\label{App:sec:Proof:lemma:boundedHessSR1}
\MajRev{\MinRev{In Algorithm~4.2, we enforce the sequence $(\tau_s^k)_k$ to be positively lower-bounded ($\tau_s^k \ge \min(\delta_2/2 , \alpha_s)$ for all $k$ at every output), which implies that we can derive $\umB_{s,k}^0 \succeq \underline{\kappa} \umI_N$ with $\underline{\kappa} > 0$.} Since $\umB_s^k$ is either $\tau_s^k\umI_N$ or $\tau_s^k\umI_N+\uvu_s^k{(\uvu_s^k)}^\Trans$, we obtain $\umB_s^k \succeq \underline{\kappa} \umI_N$. Here we use the fact that the eigenvalues of $\uvu_s^k (\uvu_s^k)^\Trans$ are always nonnegative. Now we show that $\umB_s^k$ is also upper-bounded. \MinRev{If $\uvu_s^k=\bm 0$, we have $\umB_s^k=\tau_s^k \umI_N$. From step \ref{alg:WeightingSR1:project:tau} in \Cref{alg:WeightingSR1}, we have $\tau_s^k\leq \tau_{\max}<\infty$.  In the following, we consider the branch  $\uvu_s^k\neq \bm 0$.}

Note that $\uvm_s^k=\bar \uvg_s^k-\bar \uvg_s^{j_k^*}=\int_0^1 \frac{d \vnabla F_s(\bar \uvx_s^{i_k^*}+t\,\uvs_s^k)}{dt} dt=\int_0^1 \vnabla^2 F_s(\bar \uvx_s^{i_k^*}+t\,\uvs_s^k) \uvs_s^k dt$ since $\bar \uvg_s^k = \vnabla F_s(\bar \uvx_s^k) $, $\bar\uvg_s^{j_k^*} = \vnabla F_s(\bar \uvx_s^{i_k^*})$, and $\uvs_s^k=\bar \uvx_s^k-\bar \uvx_s^{i_k^*}$. With these, we can derive 
\begin{equation}
\label{eq:AppProof:m_ks_kRela}
	\uvm_s^k=\overline {\vnabla^2 F_s^k} \uvs_s^k,\quad \overline {\vnabla^2 F_s^k}=\int_0^1 \vnabla^2 F_s (\bar \uvx_s^{i_k^*}+t\,\uvs_s^k)dt,
\end{equation}
from the mean-value theorem~\cite{comenetz2002calculus}. \MinRev{By using the condition at step \ref{alg:WeightingSR1:damp:tau} in \Cref{alg:WeightingSR1}, we get 
\begin{equation}
\label{eq:App:sec:Proof:lemma:boundedHessSR1:Bound}
	\langle\uvm_s^k-\tau_s^k\, \uvs_s^k,\uvs_s^k \rangle \geq \delta_2\|\uvs_s^k\|^2_2>0.
\end{equation}
Using  $\uvm_s^k=\overline {\vnabla^2 F_s^k} \uvs_s^k$, the definition of $\uvu_s^k$, and \eqref{eq:App:sec:Proof:lemma:boundedHessSR1:Bound}, we can upper-bound $\|\umB_s^k\|$ as
$$
\begin{array}{rcl}
	\|\umB_s^k\|\leq \tau_s^k+(\uvu_s^k)^\Trans \uvu_s^k&=&\tau_s^k+\frac{(\uvs_s^k)^\Trans (\overline {\vnabla^2 F_s^k}-\tau_s^k \umI_N)^2 \uvs_s^k }{\langle\uvm_s^k-\tau_s^k\, \uvs_s^k,\uvs_s^k \rangle}\\[6pt]
	&\leq& \tau_s^k + \frac{\|\overline {\vnabla^2 F_s^k}-\tau_s^k \umI_N\|_2^2}{\delta_2}\\[6pt]
	&\leq& \tau_{\max}+ (\kappa+\tau_{\max})^2/\delta_2 < \infty,
\end{array}
$$
where the second inequality is due to \Cref{assum:LipFunGradHessian:itemLipFun} (b). In summary, we have $\overline{\kappa}=\max\left(\alpha^*_s,\tau_{\max}+ (\kappa+\tau_{\max})^2\right)/\delta_2<\infty$ where $\alpha^*_s=\max_s \alpha_s$.
}
}


\section{Proof of \texorpdfstring{\Cref{them:ConvResults}}{TEXT}}
\label{App:sec:Proof:them:ConvResults}
	By applying \Cref{assum:LipFunGradHessian:itemLipFun} (b),  we have the following descent inequality \cite[Lemma 5.7]{beck2017first}
\begin{equation}
	\label{eq:Proof:LipIneq}
			\digamma(\uvx_k)\leq \digamma(\uvx_{k-1})+\langle \vnabla \digamma (\uvx_{k-1}),\iterDiffk\rangle+ \frac{\kappa}{2}\|\iterDiffk\|_2^2,
\end{equation}
where $\iterDiffk=\uvx_k-\uvx_{k-1}$ and $\digamma(\cdot)=\frac{1}{S}\sum_s F_s(\cdot)$.
Invoking \Cref{lemma:OptWPMIneq} with \eqref{eq:Proof:LipIneq}, we reach
\begin{equation*}
		\begin{array}{rcl}
			\digamma(\uvx_k)&\leq& \digamma(\uvx_{k-1}) + \frac{\kappa}{2}\|\iterDiffk\|_2^2+ \langle \frac{1}{S}\sum_s \uvg_s^k,\iterDiffk\rangle+\langle \frac{1}{S}\sum_s (\vnabla  F_s(\uvx_{k-1})-\uvg_s^k),\iterDiffk\rangle \\[5pt]
			&\leq &  \digamma(\uvx_{k-1})+ \frac{\kappa}{2}\|\iterDiffk\|_2^2 -\langle \frac{1}{\stepsize S} \sum_s\umB_s^k(\uvx_k-\uvx_s^k),\iterDiffk \rangle + \bar h(\uvx_{k-1})-\bar h(\uvx_k) \\[5pt]
			&&~ +\langle \frac{1}{S}\sum_s (\vnabla  F_s(\uvx_{k-1})-\uvg_s^k),\iterDiffk\rangle
	\end{array}
\end{equation*}
Moving $\bar h(\uvx_k)$ to the left hand side and using the fact that $\uvx_s^k=\uvx_{k-1}$ \MajRev{for all $s$}, we get 
\begin{equation}
\label{eq:Proof:Conv:MainIneq}
\begin{array}{rcl}
\Phi(\uvx_k)&\leq & \Phi(\uvx_{k-1})+\frac{\kappa}{2}\|\iterDiffk\|_2^2 -\langle \frac{1}{\stepsize S} \sum_s\umB_s^k \iterDiffk,\iterDiffk \rangle \\
&&~~+\langle \frac{1}{S}\sum_s (\vnabla F_s(\uvx_{k-1})-\uvg_s^k),\iterDiffk\rangle\\[5pt]
			&\leq &\Phi(\uvx_{k-1})-(\frac{2\underline{\kappa}-\stepsize \kappa}{2\stepsize})\|\iterDiffk\|_2^2+\langle \frac{1}{S}\sum_s (\vnabla F_s(\uvx_{k-1})-\uvg_s^k),\iterDiffk\rangle,
\end{array}	
\end{equation}
where the second inequality comes from \Cref{lemma:boundedHessSR1}. 

Now we derive the convergence result of \ours{} for the selection of $\uvg_s^k$ with strategies I or II. For strategy I, substituting  \MajRev{$\uvg_s^k=\nabla F_{s'}(\uvx_{k-1})$} into \eqref{eq:Proof:Conv:MainIneq}, we obtain
\begin{equation}
\label{eq:Proof:Conv:StrI}
\begin{array}{rcl}
\Phi(\uvx_k)  &\leq &\Phi(\uvx_{k-1})-(\frac{2\underline{\kappa}-\stepsize \kappa}{2\stepsize})\|\iterDiffk\|_2^2 +\langle \frac{1}{S}\sum_{s\neq s'} (\vnabla  F_s(\uvx_{k-1})-\uvg_s^k),\iterDiffk\rangle\\[5pt]
			&\leq & \Phi(\uvx_{k-1})-(\frac{2\underline{\kappa}-(1+\kappa)\stepsize}{2\stepsize})\|\iterDiffk\|_2^2 +\frac{1}{2}\underbrace{\|\frac{1}{S}\sum_{s\neq s'} (\vnabla F_s(\uvx_{k-1})-\uvg_s^k)\|_2^2}_{e_k^2},
\end{array}	
\end{equation}
where  the second inequality comes from \MajRev{Cauchy-Schwarz inequality and} $ab \leq \frac{a^2+b^2}{2}$. By   reorganizing \eqref{eq:Proof:Conv:StrI}, we get
\begin{equation}
	\label{eq:Proof:Conv:StrI:Cons}
	\frac{2\underline{\kappa}-(1+\kappa)\stepsize}{2\stepsize}\|\iterDiffk\|_2^2\leq \Phi(\uvx_{k-1})-\Phi(\uvx_k)+\frac{1}{2}e_k^2
\end{equation}
Letting $0<\stepsize < \frac{2\underline{\kappa}}{1+\kappa}$ and 
summing up \eqref{eq:Proof:Conv:StrI:Cons} from $k=1$ to $K$, we obtain
\begin{equation}
\label{eq:Proof:Str:delta:summation}
	\sum_{k=1}^K \frac{2\underline{\kappa}-(1+\kappa)\stepsize}{2\stepsize} \|\iterDiffk\|_2^2\leq \Phi(\uvx_0)-\Phi(\uvx_K)+\frac{1}{2}Ke_K^*\leq \Phi(\uvx_0)-\Phi^*+\frac{1}{2}Ke_K^*,
\end{equation}
\MajRev{where $\Phi^*$ represents the minimal value of $\Phi$ over $\mathcal C$ and $e_K^*=\max_{k\leq K} e_k^2$. Note that $\Phi$ is assumed to be lower bounded}. Dividing $\sum_{k=1}^K \frac{2\underline{\kappa}-(1+\kappa)\stepsize}{2\stepsize}$ to both sides of \eqref{eq:Proof:Str:delta:summation}, we obtain
\begin{equation}
\label{eq:Proof:UB:StrI:e_K}	
\MajRev{\Delta_K^*} \leq \frac{\Phi(\uvx_0)-\Phi^*+\frac{1}{2}Ke_K^*}{\sum_{k=1}^K \frac{2\underline{\kappa}-(1+\kappa)\stepsize}{2\stepsize}},
\end{equation}
\MajRev{where $\Delta_K^*=\min_{k\leq K} \|\iterDiffk\|_2^2$}.
By using a constant stepsize policy, we have 
\begin{equation}
\label{eq:Proof:UB:StrI:e_K:Const}	
\MajRev{\Delta_K^*} \leq \frac{2a^*(\Phi(\uvx_0)-\Phi^*)}{K(2\underline{\kappa}-(1+\kappa)a^*)}+\frac{a^*e_K^*}{2\underline{\kappa}-(1+\kappa)a^*},
\end{equation}
where $a^*$ denotes the constant stepsize. In the following, we theoretically establish an upper bound for $e_K^*$. Notice that
$$
|e_k| \leq   \frac{1}{S}\sum_{s\neq s'} \|\vnabla  F_s(\uvx_{k-1})-\uvg_s^k\|\leq\frac{1}{S}\sum_{s\neq s'} (\|\vnabla  F_s(\uvx_{k-1})\|+\|\uvg_s^k\|)\leq \frac{2(S-1)}{S}\xi.
$$
Clearly, we have $e_K^*\leq \frac{4(S-1)^2}{S^2}\xi^2$. Substituting this bound into \eqref{eq:Proof:UB:StrI:e_K}, we get \MinRev{(1)}. 

For strategy II, we uniformly sample $s'$ such that~\eqref{eq:ExpGrad} is satisfied.  \MinRev{Using the same inequality as in the second inequality of \eqref{eq:Proof:Conv:StrI}, we obtain
\begin{equation}
\label{eq:Proof:StrII:VarBound}
	\langle \frac{1}{S}\sum_s (\vnabla  F_s(\uvx_{k-1})-\uvg_{s'}^k),\iterDiffk \big\rangle\leq \frac{1}{2}\|\frac{1}{S}\sum_s (\vnabla  F_s(\uvx_{k-1})-\uvg_{s'}^k)\|_2^2+\frac{1}{2}\|\iterDiffk\|_2^2.
\end{equation}
Taking conditional expectation \MinRev{relative to the variable $\uvx_{k-1}$} for both sides of \eqref{eq:Proof:Conv:MainIneq}, using  \eqref{eq:Proof:StrII:VarBound}, \Cref{assum:expectGrad},  and letting $\stepsize< \frac{2\underline{\kappa}}{1+\kappa}$, we obtain
\begin{equation}
\label{eq:Proof:StrII:bound:iterdiff}
	\left(\frac{2\underline{\kappa}-\stepsize(1+\kappa)}{2\stepsize}\right) \mathbb E \left[\|\iterDiffk\|_2^2 \mid \uvx_{k-1}\right] \leq \mathbb E\left[ \Phi(\uvx_{k-1})-\Phi(\uvx_k) \mid \uvx_{k-1}\right]+\sigma^2/2.
\end{equation}
Summing up \eqref{eq:Proof:StrII:bound:iterdiff} from $k=1$ to $K$, we obtain
$$
\MajRev{\Delta^*_{K,\mathbb E}}\leq \frac{\mathbb E\left[ \Phi(\uvx_0)-\Phi^*\right]+K\sigma^2/2}{\sum_{k=1}^K \frac{2\underline{\kappa}-\stepsize (1+\kappa)}{2\stepsize}},
$$  
where \MinRev{$\Delta^*_{K,\mathbb E}=\min_{k\leq K} \mathbb E[\|\iterDiffk\|_2^2\mid \uvx_{k-1}]$}.}

If \MajRev{$\Phi$ satisfies the Polyak-\L{}ojasiewicz inequality described in  \Cref{assum:PLCond}}, we can further establish the convergence rate of the function values under strategy II. \MajRev{Using the fact that $\uvg_s^k=\uvg_{s'}^k$ for all $s$ and \MinRev{$\vnabla \digamma =\frac{1}{S}\sum_s \vnabla F_s$}, the definition of $\mathcal D_{\bar h}^{\mathcal C}(\uvx_{k-1}, \uvg_{s}^k,\umB^k,\stepsize)$ (cf. \eqref{eq:def:D_h}), the smoothness inequality (cf.  \eqref{eq:Proof:LipIneq}), and $\uvx_k$ is the optimal solution of~\eqref{eq:SecondProximal_TV}, we have
\begin{equation}
	\label{eq:Proof:LipIneqPL}
	\begin{array}{rcl}
			\digamma(\uvx_k) &\leq& \digamma(\uvx_{k-1})+\langle \vnabla \digamma (\uvx_{k-1}),\iterDiffk\rangle+ \frac{\kappa}{2}\|\iterDiffk\|_2^2\\[5pt]
&=& \underbrace{\langle \frac{1}{S}\sum_s \uvg_{s}^k,\iterDiffk \rangle +\frac{1}{2\stepsize}\|\iterDiffk\|_{\umB^k}^2+\bar h(\uvx_k)-\bar h(\uvx_{k-1})}_{\mathcal B_{\bar h}(\uvx_k,\uvx_{k-1},\frac{1}{S}\sum_s \uvg_{s}^k,\umB^k,\stepsize)}\\[8pt]
&&+\digamma(\uvx_{k-1})+\langle \frac{1}{S}\sum_s (\vnabla  F_s(\uvx_{k-1})-\uvg_{s}^k),\iterDiffk\rangle\\[7pt]
&&+\frac{\kappa}{2}\|\iterDiffk\|_2^2-\frac{1}{2\stepsize}\|\iterDiffk\|_{\umB^k}^2+\bar h(\uvx_{k-1})-\bar h(\uvx_k)\\[5pt]
&=& -\frac{\stepsize}{2} \mathcal D_{\bar h}^{\mathcal C}(\uvx_{k-1},\uvg_{s}^k,\umB^k,\stepsize)+\digamma(\uvx_{k-1})+\langle \frac{1}{S}\sum_s (\vnabla  F_s(\uvx_{k-1})-\uvg_{s}^k),\iterDiffk\rangle \\[5pt]
			&&+\frac{\kappa}{2}\|\iterDiffk\|_2^2-\frac{1}{2\stepsize}\|\iterDiffk\|_{\umB^k}^2+\bar h(\uvx_{k-1})-\bar h(\uvx_k).
	\end{array}
\end{equation}
} By reorganizing \eqref{eq:Proof:LipIneqPL} \MajRev{and invoking \Cref{lemma:boundedHessSR1}}, we get
\begin{equation}
\label{eq:Proof:PLIneqStrII}
\begin{array}{rcl}
		\frac{\stepsize}{2}\mathcal D_{\bar h}^{\mathcal C}(\uvx_{k-1},\uvg_{s'}^k,\umB^k,\stepsize) &\leq & \left\langle \frac{1}{S}\sum_s (\vnabla F_s(\uvx_{k-1})-\uvg_{s'}^k),\iterDiffk \right \rangle  +\frac{\stepsize \kappa-\underline \kappa}{2\stepsize}\|\iterDiffk\|_2^2\\[5pt]
		&&~~+\Phi(\uvx_{k-1})-\Phi(\uvx_k).
\end{array}
\end{equation}
\MajRev{Note that $\uvg_s^k = \uvg_{s'}^k$ for all $s$, and we use $\uvg_{s'}^k$ here since $s'$ denotes the subset selected at iteration~$k$.} Taking the conditional expectation \MinRev{relative to the variable $\uvx_{k-1}$} on both sides and letting \MinRev{$\frac{\underline{\kappa}}{1+\kappa}<\stepsize<\frac{2\underline{\kappa}}{1+\kappa}$, we get
\begin{equation}
\label{eq:Proof:ExpPLStrII}
	\begin{array}{rcl}
	\mathbb E\left[\frac{\stepsize}{2}\mathcal D_{\bar h}^{\mathcal C}(\uvx_{k-1},\uvg_{s'}^k,\umB^k,\stepsize) \mid \uvx_{k-1} \right] &\leq &\mathbb E\left[\Phi(\uvx_{k-1})-\Phi(\uvx_k)\mid \uvx_{k-1}\right] \\
	&&~+\frac{\stepsize (1+\kappa)-\underline \kappa}{2\stepsize}\mathbb E[\|\iterDiffk\|_2^2\mid \uvx_{k-1}]+\sigma^2/2\\[5pt]
	&\leq & c_k\big(\mathbb E\left[\Phi(\uvx_{k-1})-\Phi(\uvx_k) \mid \uvx_{k-1}\right]+\sigma^2/2\big),
\end{array}
\end{equation}
where $c_k=1+\frac{\stepsize (\kappa+1)-\underline \kappa}{2\underline \kappa- \stepsize(1+\kappa)}$. The first and second inequalities come from  \eqref{eq:Proof:StrII:VarBound} and \eqref{eq:Proof:StrII:bound:iterdiff}, respectively.}


Now, we construct a lower bound for the left hand side of \eqref{eq:Proof:ExpPLStrII}. \MinRev{By using \Cref{lemma:boundedHessSR1}, we have the following inequality
$$
 \mathcal B_{\bar h}(\uvx',\uvx_{k-1},\uvg_{s'}^k,\umB^k,\stepsize)
\leq \mathcal B_{\bar h}(\uvx',\uvx_{k-1},\uvg_{s'}^k,\umI_N,\frac{\stepsize}{\overline{\kappa}}).
$$
Taking the conditional expectation relative to the variable $\uvx_{k-1}$ on both sides of the above inequality, we obtain
$$
\mathbb E\left[\mathcal B_{\bar h}(\uvx',\uvx_{k-1},\uvg_{s'}^k,\umB^k,\stepsize) \mid \uvx_{k-1}\right]\leq \mathcal B_{\bar h}(\uvx',\uvx_{k-1},\vnabla \digamma(\uvx_{k-1}),\umI_N,\frac{\stepsize}{\overline{\kappa}}).
$$}
Minimizing both sides of the above inequality with respect to $\uvx'$, we reach
\MinRev{
\begin{equation}
\label{eq:Proof:ExpEstiToGradLP}
		\min\limits_{\uvx'\in\mathcal C}\mathbb E\left[\mathcal B_{\bar h}(\uvx',\uvx_{k-1},\uvg_{s'}^k,\umB^k,\stepsize)\mid \uvx_{k-1}\right]\leq\min\limits_{\uvx'\in\mathcal C}\mathcal B_{\bar h}(\uvx',\uvx_{k-1},\vnabla \digamma(\uvx_{k-1}),\umI_N,\frac{\stepsize}{\overline{\kappa}}).
\end{equation}
}
By using the following relations 
$$
	\mathcal D_{\bar h}^{\mathcal C}(\uvx_{k-1},\uvg_{s'}^k,\umB^k,\stepsize) = -\frac{2}{\stepsize}\min\limits_{\uvx'\in\mathcal C}\mathcal B_{\bar h}(\uvx',\uvx_{k-1},\uvg_{s'}^k,\umB^k,\stepsize),
$$ 
$$
\mathcal D_{\bar h}^{\mathcal C}(\uvx_{k-1},\vnabla \digamma(\uvx_{k-1}),\umI_N,\frac{\stepsize}{\overline{\kappa}})= -\frac{2\overline{\kappa}}{\stepsize}\min\limits_{\uvx'\in\mathcal C}\mathcal B_{\bar h}(\uvx',\uvx_{k-1},\vnabla \digamma(\uvx_{k-1}),\umI_N,\frac{\stepsize}{\overline{\kappa}}),
$$
\MinRev{
$$
\mathbb E \min_{\uvx'\in\mathcal C}  \left[
\mathcal{B}_{\bar h}(\uvx', \uvx_{k-1}, \uvg_{s'}^k, \umB^k, a_k)
\,\middle|\, \uvx_{k-1}
\right]\leq \min_{\uvx'\in\mathcal C} \mathbb E \left[
\mathcal{B}_{\bar h}(\uvx', \uvx_{k-1}, \uvg_{s'}^k, \umB^k, a_k)
\,\middle|\, \uvx_{k-1}
\right],
$$}
 and \eqref{eq:Proof:ExpEstiToGradLP}, we derive
\MinRev{
$$
\mathbb E\left[\frac{\stepsize}{2}\mathcal D_{\bar h}^{\mathcal C}(\uvx_{k-1},\uvg_{s'}^k,\umB^k,\stepsize)\mid \uvx_{k-1}\right]\geq \frac{\stepsize}{2\overline{\kappa}} \mathcal D_{\bar h}^{\mathcal C}(\uvx_{k-1},\vnabla \digamma(\uvx_{k-1}), \umI_N,\frac{\stepsize}{\overline{\kappa}}).
$$}
Substituting the above inequality into \eqref{eq:Proof:ExpPLStrII}, we reach
\MinRev{
$$
\frac{\stepsize}{2\overline{\kappa}c_k} \mathcal D_{\bar h}^{\mathcal C}(\uvx_{k-1},\vnabla \digamma(\uvx_{k-1}),\umI_N,\frac{\stepsize}{\overline{\kappa}})\leq \mathbb E\left[\Phi(\uvx_{k-1})-\Phi(\uvx_k)\mid \uvx_{k-1}\right]+\sigma^2/2.
$$}
Summing up the above inequality from $k=1$ to $K$, we obtain
\MinRev{
$$
\sum_{k=1}^K\frac{\stepsize}{2\overline{\kappa}c_k} \mathcal D_{\bar h}^{\mathcal C}(\uvx_{k-1},\vnabla \digamma(\uvx_{k-1}),\umI_N,\frac{\stepsize}{\overline{\kappa}})\leq \mathbb E\left[\Phi(\uvx_0)-\Phi^*\right]+K\sigma^2/2.
$$}
Sampling the output iterate $\uvx_{k^*}$ with probability mass function $\mathrm{Prob}\{\uvx_{k^*}\}=\frac{\stepsize}{2\overline{\kappa}c_k K}$ for any $k=1,2,\cdots,K$, we reach 
\MinRev{
$$
 \mathbb E\left[\mathcal D_{\bar h}^{\mathcal C}(\uvx_{k^*},\vnabla \digamma(\uvx_{k^*}),\umI_N,\frac{a_{k^*}}{\overline{\kappa}})\right] \leq \frac{\mathbb E\left[\Phi(\uvx_0)-\Phi^*\right]}{K}+\sigma^2/2.
$$}
Using \eqref{eq:PLIneq} and the above inequality, we establish 
\MinRev{
$$
\mathbb E\left[\Phi(\uvx_{k^*})-\Phi^*\right] \leq \frac{\mathbb E\left[\Phi(\uvx_0)-\Phi^*\right]}{2\varrho K}+\sigma^2/(4\varrho).
$$}

\section*{Acknowledgments}
\MajRev{We thank the anonymous reviewers for their constructive feedback, which has helped improve the quality of this paper.} We would also like to thank Dr.~Ahmed Ayoub, Dr.~Joowon Lim, and Prof.~Demetri Psaltis for providing us with the real data.

\bibliographystyle{siamplain}
\bibliography{ResponseSIIMS/Refs}

\end{document}